\documentclass[11pt]{amsart}
\usepackage{}
%test
%\setlength{\parindent}{16pt}
%\setlength{\parskip}{10pt plus 1pt minus 2pt}
%\setlength{\baselineskip}{8pt plus 2pt minus 1pt}
%\linespread{1.6}
%\renewcommand{\arraystretch}{1.2}
%\setcounter{tocdepth}{1}
\tolerance=500
\setlength{\emergencystretch}{3em}
\usepackage[T1]{fontenc}
\usepackage{lmodern}
\usepackage{ifthen}
\usepackage{amsfonts}
\usepackage{amsxtra}
\usepackage{amssymb}
\usepackage{amsthm}
\usepackage{array}
\usepackage[margin=1in]{geometry}
\usepackage{xcolor}
\definecolor{cite}{rgb}{0.50,0.00,1.00}
\definecolor{url}{rgb}{0.00,0.50,0.75}
\definecolor{link}{rgb}{0.00,0.00,0.50}
\usepackage[colorlinks,linkcolor=link,urlcolor=url,citecolor=cite,pagebackref,breaklinks]{hyperref}
\usepackage{mathtools}
\usepackage{mathrsfs}
\usepackage[all]{xy}
\usepackage[lite,abbrev,msc-links]{amsrefs}

\makeindex

\theoremstyle{definition} %标题与编号为黑体, 正文为正常字体
\newtheorem{Unity}{Unity}[section] %\newtheorem{定理环境名}{标题}[主计数器名]
\newtheorem*{defn*}{Definition} %\newtheorem*{定理环境名}[已定义定理环境名]{标题} 手动编号, 不自动编号
\newtheorem{defn}[Unity]{Definition} %\newtheorem{定理环境名}[已定义定理环境名]{标题} 与当前环境共用同一个序号计数器

\theoremstyle{plain} %标题与编号为黑体, 正文为斜体
\newtheorem*{thm*}{Theorem}
\newtheorem{thm}[Unity]{Theorem}
\newtheorem{prop}[Unity]{Proposition}
\newtheorem*{cor*}{Corollary}
\newtheorem{cor}[Unity]{Corollary}
\newtheorem{lem}[Unity]{Lemma}
\newtheorem{conj}[Unity]{Conjecture}

\theoremstyle{remark} %标题与编号为斜体, 正文为正常字体
\newtheorem*{rmk*}{Remark}
\newtheorem{rmk}[Unity]{Remark}

%章节命令(从大到小):\part \chapter \section \subsection \subsubsection \paragraph \subparagraph
\numberwithin{Unity}{section}%\numberwithin{计数器}{主计数器}

\newcommand{\sH}{\mathscr{H}}

\newcommand{\sO}{\mathscr{O}}

\newcommand{\sX}{\mathscr{X}}

\newcommand{\bN}{\mathbb{N}}

\newcommand{\spec}{\textrm{Spec}}

\begin{document}
\title{On the Simultaneously Generation of Jets of the Adjoint Bundles}
\author[Junchao Shentu]{Junchao Shentu}
\email{stjc@ustc.edu.cn}
\address{School of Mathematical Sciences,
	University of Science and Technology of China, Hefei, 230026, China}
\author[Yongming Zhang]{Yongming Zhang}
\email{zhangyongming@amss.ac.cn}
\address{School of Mathematical Sciences, Fudan University, Shanghai, 200433,
	China}
\maketitle
\begin{abstract}
In this paper, we investigate the problem of simultaneously generations of $r$-jets of $\omega_X\otimes L^{\otimes m}$ when $L$ is ample and base point free. It turns out that in this case, the bound of $m$ is optimistic, i.e. $m\geq f(r)=n+r+1$, which is linear in $r$. Our results work over arbitrary characteristics. We also treat the same problem when $X$ is singular where $\omega_X$ is replaced by the pushforward of canonical sheaves or the cohomology sheaves of dualizing complexes.
\end{abstract}
\section{Introduction}
Let $X$ be a projective variety of dimension $n$ and $F$ be a coherent sheaf on $X$. Given $l$ closed points $x_i\in X$ and non-negative integers $r_i$, $i=1,\cdots,l$, we say that $F$ \textbf{simultaneously generates $r_i$-jets on $x_i$} if the canonical morphism
$$H^0(X,F)\rightarrow H^0(X,F\otimes\sO_X/m_{x_1}^{r_1+1}\cdots m_{x_l}^{r_l+1})$$
is surjective. $F$ \textbf{simultaneously generates $r$-jets} if for any integer $l\geq 1$ and $x_1,\cdots,x_l\in X$, $F$ simultaneously generates $r_i$-jets on $x_i$ whenever $\sum_{i=1}^l (r_i+1)=r+1$. When $F$ is a line bundle, this notion is equivalent to $r$-jets ampleness introduced by M. C. Beltrametti and A. J. Sommese (\cite{Beltrametti1993}).

If $F$ simultaneously generates $r$-jets, then for any 0-dimensional subscheme $Z$ of length $r+1$ (i.e. $\dim H^0(Z,\sO_Z)=r+1$), the natural morphism
$$H^0(X,F)\rightarrow H^0(X,F\otimes\sO_Z)$$
is surjective (Proposition \ref{prop_sepjet_imply_sep0cycles}).

There has been a great deal of interests for decades in exploring the generation of jets of the adjoint linear series $|\omega_X\otimes L|$ where $L$ is a line bundle on $X$. Among them there is a celebrate conjecture by Fujita about simultaneously generations of 0-jets and 1-jets.
\begin{conj}[Fujita \cite{Fujita1987}]
Let $X$ be a complex projective manifold of dimension $n$ and $L$ be an ample line bundle on $X$. Then
\begin{enumerate}
  \item $\omega_X\otimes L^{\otimes(n+1)}$ is globally generated;
  \item $\omega_X\otimes L^{\otimes(n+2)}$ is very ample.
\end{enumerate}
\end{conj}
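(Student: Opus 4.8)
The statement is Fujita's conjecture, which for an arbitrary ample $L$ (with no base-point-freeness assumed) is a famous open problem; what follows is the standard multiplier-ideal program, together with the precise point at which it stalls. For part (1), fix a point $x\in X$. From the exact sequence
$$0\to \sI_x\otimes\omega_X\otimes L^{\otimes(n+1)}\to \omega_X\otimes L^{\otimes(n+1)}\to (\omega_X\otimes L^{\otimes(n+1)})\otimes\bC(x)\to 0,$$
global generation at $x$ follows once I show $H^1(X,\sI_x\otimes\omega_X\otimes L^{\otimes(n+1)})=0$. The plan is to produce an effective $\mathbb{Q}$-divisor $D\equiv cL$ with $c<n+1$ whose multiplier ideal $\sJ(D)$ agrees with $\sI_x$ near $x$ and is trivial elsewhere, i.e.\ whose non-klt locus is the isolated reduced point $\{x\}$. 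Then $\omega_X\otimes L^{\otimes(n+1)}\otimes\sO_X(-D)=\omega_X\otimes N$ with $N\equiv (n+1-c)L$ nef and big, so Nadel vanishing gives $H^1(X,\sJ(D)\otimes\omega_X\otimes L^{\otimes(n+1)})=0$, and comparing with the structure of $\sJ(D)$ at $x$ yields the required surjectivity.

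The heart of the argument is the construction of $D$ with an isolated log canonical singularity of threshold $1$ at $x$ that is as cheap as possible in $L$. First I would manufacture an initial singularity: Riemann--Roch gives $h^0(X,L^{\otimes k})=\frac{(L^n)}{n!}k^n+O(k^{n-1})$, while vanishing to order $s$ at $x$ imposes $\binom{s+n-1}{n}$ linear conditions, so for $s$ just below $(L^n)^{1/n}k$ there is $A_0\in|L^{\otimes k}|$ with $\mathrm{mult}_x A_0>n$ (for $k\gg0$); then $\tfrac1k A_0\equiv (\text{small})\,L$ is already non-klt at $x$. I would then run the Angehrn--Siu tie-breaking induction: after perturbing to make the non-klt locus irreducible with a unique minimal log canonical center $W$, I use the positivity of $L|_W$ (via Kawamata subadjunction on $W$, or a fresh dimension count on $W$) to add a small multiple of $L$ and cut $\dim W$ strictly, iterating until the minimal center is $\{x\}$ with coefficient exactly $1$.

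The decisive obstruction is the bookkeeping of the coefficient $c$. Each descent step spends a definite amount of $L$, and the honest total is of order $\tfrac12 n(n+1)$---this is precisely the Angehrn--Siu bound, quadratic rather than the conjectural linear $n+1$. Reaching $n+1$ would require showing that each center can be cut down essentially \emph{for free}: at a minimal center $W$ of dimension $d$ one must create the next singularity using only about $1\cdot L$ rather than $(d+1)\cdot L$, which in turn demands sharp lower bounds on the restricted volume $(L^{d}\cdot W)$ together with tight control of how discrepancies transform under subadjunction. These estimates are exactly what is unavailable in general; this is why the conjecture is known only in low dimension (Reider for $n=2$, Ein--Lazarsfeld for $n=3$, Kawamata for $n=4$), and I expect this step, not the vanishing, to be the true obstacle.

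For part (2) the same machinery is applied to a pair of distinct points $x,y$ (and, to separate tangent directions, to a single point carrying a second-order condition): I would build $D\equiv cL$ with $c<n+2$ whose non-klt locus is exactly $\{x,y\}$ and with $\sJ(D)=m_xm_y$ near these points, so that Nadel vanishing forces $H^0(X,\omega_X\otimes L^{\otimes(n+2)})\to H^0(Z,(\omega_X\otimes L^{\otimes(n+2)})\otimes\sO_Z)$ to be surjective for the relevant length-two schemes $Z$; by Proposition \ref{prop_sepjet_imply_sep0cycles} this is equivalent to very ampleness. The additional difficulty is to keep the construction symmetric in $x$ and $y$ while preventing the non-klt locus from sweeping out a curve joining them---again surmountable only at the cost of extra multiples of $L$, so that here too the sharp constant $n+2$ lies beyond the reach of the present method.
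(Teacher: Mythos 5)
You were asked to prove a statement that the paper itself presents as a \emph{conjecture}: the paper contains no proof of Fujita's conjecture and only surveys the known partial results (Reider for $n=2$, Ein--Lazarsfeld for $n=3$, Kawamata's global generation in dimension $4$, the claimed case of dimension $5$ by Ye--Zhu, and the semiampleness of $\omega_X\otimes L^{\otimes(n+1)}$ giving ampleness of $\omega_X\otimes L^{\otimes(n+2)}$). So there is no proof to compare yours against, and your refusal to manufacture one is the correct response; your account of the multiplier-ideal program --- Nadel vanishing after constructing $D\equiv cL$ with an isolated non-klt point, Angehrn--Siu tie-breaking to cut down minimal log canonical centers, and the quadratic cost $\tfrac12 n(n+1)$ matching the paper's Theorem \ref{thm_siu} --- faithfully locates the genuine obstruction in the descent bookkeeping rather than in the vanishing. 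It is worth noting how the paper itself sidesteps the impasse: by adding the hypothesis that $L$ is globally generated, it obtains the linear bound (Theorem \ref{thm_log_Fujita}, and more generally Theorem \ref{thm_regular_sepajets}) through Castelnuovo--Mumford regularity and Koszul complexes instead of multiplier ideals, an argument that moreover works in every characteristic, where even the surface case of the conjecture proper is open.

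Two corrections to your sketch. First, the initial singularity is not cheap: if $A_0\in|L^{\otimes k}|$ then $\tfrac1k A_0\equiv L$ exactly, not a ``small'' multiple of $L$, and its multiplicity at $x$ is only about $(L^n)^{1/n}$, which can be as small as $1$; since non-klt at a smooth point requires multiplicity $\geq n$, creating the first singularity already costs on the order of $n$ copies of $L$ when $(L^n)$ is small. This entry fee, plus roughly $d$ copies per descent step from a $d$-dimensional center, is where the quadratic total comes from, so the obstruction begins one step earlier than your narrative suggests. Second, in part (2) the equivalence of very ampleness with surjectivity onto all length-two subschemes $Z$ is classical, but Proposition \ref{prop_sepjet_imply_sep0cycles} argues in the opposite direction (simultaneous jet generation implies separation of $0$-dimensional subschemes), so it is not the right citation for that reduction.
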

This conjecture is proved up to dimension $3$ (\cite{Ein-Lazarsfeld1993,Reider1988}). The global generation part in dimension $4$ is proved by Kawamata (\cite{Kawamata1997}) and the affirmative answer in dimension $5$ is claimed by Fei Ye and Zhixian Zhu (\cite{Ye-Zhu2015}). It is known that $\omega_X\otimes L^{\otimes(n+1)}$ is always semiample (\cite{Kollar-Mori1998}, Theorem 3.3) and in particular is nef. This implies that $\omega_X\otimes L^{\otimes(n+2)}$ is ample.

For higher $r$-jets when $L$ is ample, Demailly (\cite{Demailly1993}) uses the method of the solution of the Monge-Ampr\`{e}re equation to obtain the generation of $r$-jets at every point of $X$ by global holomorphic sections of $\omega_X^{\otimes 2}\otimes L^{\otimes m}$ for $m\geq 6(n+r)^n$. Later Siu \cite{Siu1996} shows that $m\geq 2(n+2+n\binom{3n+2r-1}{n})$ is enough. He also gets results on generations of jets on 0-dimensional subsets: Given $l$ points $x_1,\cdots, x_l$ on $X$ and non-negative integers $r_1,\cdots, r_l$, $\omega_X^{\otimes 2}\otimes L^{\otimes m}$ simultaneously generates $r_i$-jets on $x_i$ whenever $m\geq 2(n+2+n\sum_{i=1}^l\binom{3n+2r_i-1}{n})$. This bound is improved by Demailly (\cite{Demailly1996}) to $m\geq 2+\sum_{i=1}^l\binom{3n+2r_i-1}{n}$.

In \cite{A-Siu1995}, Siu gives a criteria for separating distinct points of $\omega_X\otimes L^{\otimes m}$ instead of $ \omega_X^{\otimes2}\otimes L^{\otimes m}$ for a general $n$ and the order $m$ is only quadratic in $n$ instead of exponential:
\begin{thm}[\cite{A-Siu1995}]\label{thm_siu}
Let $\kappa$ be a positive number. If $(L^d\cdot W)^{\frac{1}{d}}\geq \frac{1}{2}n(n+2r-1)+\kappa$ for any irreducible subvariety $W$ of dimension $1\leq d\leq n$ in $X$, then $\omega_X\otimes L$ simultaneously generates $0$-jets on any set of $r$ distinct points $x_1,\cdots, x_r$ of X. In particular, if $m\geq \frac{1}{2}(n^2+2rn-n+2)$, then $\omega_X\otimes L^{\otimes m}$ simultaneously generates $0$-jets on any $r$ distinct points $x_1,\cdots, x_r$ of X.
\end{thm}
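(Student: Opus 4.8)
The plan is to deduce the statement from the Nadel vanishing theorem once a suitable singular Hermitian metric on $L$ has been produced. Working over $\bC$ and writing $\omega_X=K_X$, I want to construct a singular metric $h=e^{-\varphi}$ on $L$ whose curvature current $\frac{i}{2\pi}\partial\bar\partial\varphi$ is a closed positive current in $c_1(L)$ and whose multiplier ideal sheaf $\mathcal{I}(\varphi)$ has cosupport exactly the reduced $0$-dimensional scheme $\{x_1,\dots,x_r\}$, i.e. $\mathcal{I}(\varphi)=m_{x_1}\cdots m_{x_r}$ near the points and $\mathcal{I}(\varphi)=\sO_X$ elsewhere. Granting such a $\varphi$, Nadel vanishing gives $H^1(X,\omega_X\otimes L\otimes\mathcal{I}(\varphi))=0$, so the evaluation sequence
$$0\to\omega_X\otimes L\otimes\mathcal{I}(\varphi)\to\omega_X\otimes L\to\omega_X\otimes L\otimes\sO_X/(m_{x_1}\cdots m_{x_r})\to 0$$
yields surjectivity of $H^0(X,\omega_X\otimes L)\to\bigoplus_{i=1}^r(\omega_X\otimes L)_{x_i}$, which is precisely simultaneous generation of $0$-jets at the $r$ points.

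The core of the argument is the construction of $\varphi$, and this is where the numerical hypothesis enters. Near a point the weight $n\log|z|$ has Lelong number $n$ and multiplier ideal exactly $m_x$, so the target is a global current whose local behaviour at each $x_i$ is of this type and which has no other non-klt point. The key quantity is that $(L^d\cdot W)^{1/d}$ is essentially the largest Lelong number a closed positive current in $c_1(L)|_W$ can carry at a point of an irreducible $W$ of dimension $d$: via Demailly's regularisation and the mass comparison $\int_W T^{\wedge d}\geq\sum_i\nu(T,x_i)^d$, one can realise prescribed Lelong numbers at the $x_i$ as soon as enough self-intersection is available. The hypothesis $(L^d\cdot W)^{1/d}\geq\frac12 n(n+2r-1)+\kappa$ for every such $W$ guarantees, uniformly along all subvarieties through the points, that the available Lelong number exceeds the threshold $\frac12 n(n+2r-1)$ while positivity is preserved.

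The passage from "large Lelong numbers" to "multiplier ideal equal to the reduced points" is carried out by inductively cutting down the non-klt locus in the style of Angehrn--Siu. Choosing the optimal coefficient $c$ so that the non-klt locus $V$ of $c\varphi$ becomes nontrivial, one restricts to an irreducible component $W$ of $V$ of dimension $d$, applies the Ohsawa--Takegoshi extension theorem to descend sections to $W$, and reapplies the construction on $W$ using the bound for $(L^d\cdot W)^{1/d}$ to concentrate the singularity onto a proper subvariety while maintaining the required vanishing at the other points. Iterating over $d=n,n-1,\dots,0$ drives the non-klt locus down to the isolated reduced points $x_1,\dots,x_r$; the positive slack $\kappa$ furnishes the room needed for the tie-breaking perturbations that guarantee the locus is exactly reduced and isolated at each step.

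The main obstacle is precisely this control of the log-canonical threshold during the induction: ensuring that at each dimension the singularity concentrates onto a lower-dimensional set rather than spreading, and that the cumulative numerical cost is accounted for exactly. Bookkeeping the cost shows that handling dimension $d$ contributes a term of order $d$ while forcing simultaneous vanishing at the remaining points contributes the linear-in-$r$ terms; summing over $d=1,\dots,n$ produces the coefficient $\frac12 n(n-1)$ together with the $nr$ term, i.e. the bound $\frac12 n(n+2r-1)$. Finally, for the "in particular" clause, replacing $L$ by $L^{\otimes m}$ gives $((L^{\otimes m})^d\cdot W)^{1/d}=m\,(L^d\cdot W)^{1/d}\geq m$, since $L$ is ample and intersection numbers are positive integers; hence $m\geq\frac12(n^2+2rn-n+2)=\frac12 n(n+2r-1)+1$ meets the hypothesis with $\kappa=1$, and $\omega_X\otimes L^{\otimes m}$ simultaneously generates $0$-jets on any $r$ distinct points.
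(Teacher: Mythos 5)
The paper itself offers no proof of this statement: Theorem \ref{thm_siu} is imported verbatim from Angehrn--Siu, so your attempt can only be measured against the argument of that cited paper. Your outline does follow the same strategy as the source (Nadel vanishing applied to a singular metric whose non-klt locus is isolated at the prescribed points, constructed by inductively cutting down non-klt centers with Ohsawa--Takegoshi extension). The ``in particular'' clause is the one step you actually prove, and it is correct: ampleness gives $(L^d\cdot W)\geq 1$, hence $((L^{\otimes m})^d\cdot W)^{1/d}\geq m$, and $\frac{1}{2}(n^2+2rn-n+2)=\frac{1}{2}n(n+2r-1)+1$, so $\kappa=1$ works.

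However, as a proof the proposal has a genuine gap: everything that makes the theorem true is asserted rather than derived. First, the constructive step is backwards. The mass comparison $\int_W T^{\wedge d}\geq\sum_i\nu(T,x_i)^d$ is an \emph{obstruction} satisfied by closed positive currents, not a mechanism for producing them; what Angehrn--Siu actually use to manufacture weights with prescribed vanishing is a dimension count (Riemann--Roch on $W$, with care at singular points of $W$): the number of jet conditions of order $s$ grows like $s^d/d!$ while $h^0(W,L^{\otimes k}|_W)$ grows like $(L^d\cdot W)k^d/d!$, which is where the hypothesis enters. Second, the entire quantitative heart of the theorem --- why cutting the non-klt center down by one dimension costs a definite amount of positivity, why keeping the remaining $r-1$ points non-klt during the induction contributes the $2r-1$ (rather than $1$), why the tie-breaking perturbation keeps the locus from spreading, and why the total cost sums to exactly $\frac{1}{2}n(n+2r-1)$ --- is compressed into ``bookkeeping the cost shows,'' which restates the numerical content of the theorem instead of proving it. Two smaller inaccuracies: demanding $\mathcal{I}(\varphi)=m_{x_1}\cdots m_{x_r}$ is both more than the induction can deliver and more than is needed (isolated non-klt points with $\mathcal{I}(\varphi)\subseteq m_{x_i}$ suffice for $0$-jets), and Nadel vanishing requires the curvature current to dominate a positive multiple of a K\"ahler form, not mere positivity, which is exactly where the slack $\kappa$ must be spent.
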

J. Koll\'{a}r \cite{Kollar1997} generalizes this criterion to klt pairs by algebraic methods. 

In the case of surfaces the results are more optimistic.
\begin{thm}[Beltramette and Sommese, \cite{Beltramette-Sommese1991}]
Let $L$ be a nef line bundle on a complex smooth projective surface $X$ and suppose that $L^2\geq4r+5$. Then either $\omega_X\otimes L$ is $r$-very ample or there exists an effective divisor $D$ containing some $0$-dimensional scheme of length $\leq r+1$ along which $r$-very ampleness fails, such that a power of the line bundle $L(-2D)$ has sections and
$$(D,L)-r-1\leq D^2<\frac{1}{2}(D,L)<r+1.$$
\end{thm}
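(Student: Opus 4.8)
The plan is to run Reider's method: convert the failure of $r$-very ampleness into the existence of a rank-two vector bundle violating the Bogomolov inequality, and then read off the divisor $D$ from the destabilizing sub-line-bundle. First I would suppose $\omega_X\otimes L$ is not $r$-very ample. Since (for a line bundle) this is the failure of the simultaneous generation of $r$-jets recalled in the introduction, it produces a $0$-dimensional subscheme $Z\subset X$ with $\operatorname{length}(Z)\le r+1$ for which
$$H^0(X,\omega_X\otimes L)\longrightarrow H^0(Z,(\omega_X\otimes L)\otimes\sO_Z)$$
is not surjective (cf. Proposition \ref{prop_sepjet_imply_sep0cycles}). Choosing $Z$ of minimal length among all such subschemes, the restriction map is surjective on every proper subscheme $Z'\subsetneq Z$, which is precisely the Cayley--Bacharach condition for $Z$ relative to $|K_X+L|$. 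By Serre duality the nonsurjectivity yields $0\neq H^1(X,\mathcal I_Z\otimes\omega_X\otimes L)^{*}\cong\operatorname{Ext}^1(\mathcal I_Z\otimes L,\sO_X)$, so there is a nonsplit extension
$$0\to\sO_X\to E\to\mathcal I_Z\otimes L\to 0,$$
and the Cayley--Bacharach property is exactly what forces the sheaf $E$ to be locally free of rank two (the Serre construction).

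Next I would read off Chern classes from the extension, $c_1(E)=L$ and $c_2(E)=\operatorname{length}(Z)\le r+1$. The hypothesis $L^2\ge 4r+5$ then gives
$$c_1(E)^2-4c_2(E)=L^2-4\operatorname{length}(Z)\ge L^2-4(r+1)\ge 1>0,$$
so $E$ violates the Bogomolov inequality and is unstable. Bogomolov instability provides a saturated sub-line-bundle $\sO_X(M)\hookrightarrow E$, with quotient $\mathcal I_W\otimes\sO_X(L-M)$ for some $0$-dimensional subscheme $W$, such that $(2M-L)^2>0$ and $(2M-L,A)>0$ for every ample divisor $A$; equivalently $2M-L$ lies in the positive cone.

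I would then transport this structure back to $\mathcal I_Z\otimes L$. The composite $\sO_X(M)\to E\to\mathcal I_Z\otimes L$ cannot vanish, for otherwise $\sO_X(M)$ would factor through $\sO_X$, making $M$ antieffective and forcing $(2M-L,A)\le 0$, a contradiction. Hence the composite is injective and corresponds to a nonzero section of $\mathcal I_Z\otimes\sO_X(L-M)$, cutting out an effective divisor $D\in|L-M|$ through $Z$; thus $Z\subset D$ and $2M-L=L-2D$ lies in the positive cone, so $L-2D$ is big and a power of $L(-2D)$ has sections. Comparing $c_2(E)=(L-D,D)+\operatorname{length}(W)$ with the value $c_2(E)=\operatorname{length}(Z)$ from the first extension gives $(L-D,D)=\operatorname{length}(Z)-\operatorname{length}(W)\le r+1$, which is the left inequality $(D,L)-(r+1)\le D^2$. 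For the middle inequality $D^2<\tfrac12(D,L)$, equivalently $(L-2D,D)>0$, I would argue by cases: if $D^2<0$ it is immediate because $(D,L)\ge 0$ ($L$ nef, $D$ effective); if $D^2\ge 0$ then $D$ lies in the closure of the positive cone and pairs strictly positively with the positive-cone class $L-2D$. The final inequality $\tfrac12(D,L)<r+1$ then follows formally by combining the first two.

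The main obstacle I anticipate is the local freeness of $E$: one must extract the Cayley--Bacharach condition cleanly from the minimality of $Z$ and then verify, via the local-to-global spectral sequence for $\operatorname{Ext}$, that a suitable extension class produces a genuine rank-two vector bundle rather than a merely torsion-free sheaf. The Bogomolov instability theorem, though the conceptual engine of the argument, may be cited as a black box over $\bC$; the delicate part is the bookkeeping in the passage among $Z$, the bundle $E$, and the divisor $D$, together with the case analysis controlling the sign of $(L-2D,D)$.
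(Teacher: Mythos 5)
This statement is quoted in the paper purely as background, with the proof deferred to \cite{Beltramette-Sommese1991}; the paper contains no argument of its own for it, so there is nothing internal to compare your proposal against. Your sketch is the standard Reider-method proof---minimal failing subscheme, Cayley--Bacharach via the length count, Serre construction of a rank-two bundle with $c_1=L$, $c_2=\operatorname{length}(Z)\le r+1$, Bogomolov instability from $L^2\ge 4r+5$, and the Hodge-index/positive-cone bookkeeping yielding the three inequalities (the third indeed follows formally from the first two)---and this is essentially the argument of the cited source, with the two genuine black boxes (local freeness of the extension under Cayley--Bacharach, and Bogomolov's theorem) correctly identified as the steps requiring citation rather than invention.
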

By $r$-very ampleness we mean that for each 0-dimensional subscheme $Z$ of length $r+1$, the canonical morphism $H^0(\omega_X\otimes L)\rightarrow H^0(X,\omega_X\otimes L\otimes\sO_Z)$ is surjective. As a consequence (c.f. \cite{Beltrametti1993} Proposition 2.1), if $L$ is ample, then $\omega_X\otimes L^{\otimes m}$ simultaneously generates $r$-jets whenever $m\geq 3$ for $r=0$ and $m\geq (r+1)(r+2)$ for $r\geq 1$.

Extrapolating from the Fujita's conjecture, one may hope that $\omega_X\otimes L^{\otimes (n+r+1)}$ simultaneously generates $r$-jets if $L$ is ample. However, this is false even for the generation of $r$-jets on a single point. In fact, \cite{Ein1995} shows that there cannot exist a linear function $f(r)$ (depending on $n$ but independent of $X$ and $L$) such that $\omega_X\otimes L^{\otimes f(r)}$ generates $r$-jets at \textit{every} point $x\in X$. On the other side, it is shown in [loc.cit.] that if $L$ is ample and $m\geq n(n+r)$, then $\omega_X\otimes L^{\otimes m}$ generates $r$-jets on a \textit{general} point $x\in X$.

The story in positive characteristic is less known. Even the Fujita conjecture in dimension 2 is open. The positive results along this line are due to K. E. Smith and D. S. Keeler (\cite{Keeler2008,Smith1997}). They respectively show that Fujita's conjecture (1) and (2) holds over positive characteristics when $L$ is ample and globally generated .

%In this paper, we investigate the problem of simultaneously generations of $r$-jets of $\omega_X\otimes L^m$ when $L$ is ample and base point free. It turns out that in this case, the bound of $m$ is optimistic, i.e. $m\geq f(r)=n+r+1$, which is linear in $r$. Our results works over arbitrary characteristics. We also treat the same problem when $X$ is singular where $\omega_X$ is replaced by the pushforward of canonical sheaf or by the dualizing complex (see the next section for the main results).
The paper is organized as follows. The second section collects the main results in this paper. The third section
presents some preliminary materials which are used in the last section to prove the main results.

\textbf{Notations:}
\begin{itemize}
  \item A variety $X$ over a field $k$ is a reduced scheme finite type over $k$. We do not assume that $X$ is irreducible.
  \item Denote $\omega_{X/Y}$ be the relative dualizing sheaf when $X\rightarrow Y$ is a Cohen-Macaulay morphism between varieties. When $Y=\textrm{Spec}(k)$, we use $\omega_X$ for simplicity. For arbitrary variety $Z$ over $k$, denote by $\omega^\bullet_{Z/k}$ the dualizing complex of $Z$ over $k$.
\end{itemize}
\section{Main Results}
Let $k$ be an algebraically closed field and all varieties are defined over $k$. The main technical result in this paper is
\begin{thm}[Theorem \ref{thm_regular_sepajets}]\label{intro_thm_regular_sepajets}
Let $X$ be a projective variety and $L$ be an ample and globally generated line bundle on $X$. If $F$ is a $0$-regular coherent sheaf with respect to $L$, then $F\otimes L^{\otimes r}$ simultaneously generates  $r$-jets.
\end{thm}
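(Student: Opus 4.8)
The plan is to reduce the surjectivity defining simultaneous generation of $r$-jets to a single cohomology vanishing, and then to prove that vanishing by an induction that peels off one unit of the jet at a time. Fix distinct points $x_1,\dots,x_l$ and integers $r_i\ge 0$ with $\sum_{i=1}^l(r_i+1)=r+1$, and set $\sI=\prod_{i=1}^l\mathfrak{m}_{x_i}^{r_i+1}$ and $G=F\otimes L^{\otimes r}$. Writing $\sI G$ for the image of $\sI\otimes G\to G$, the long exact sequence attached to $0\to \sI G\to G\to G\otimes\sO_X/\sI\to 0$ shows that it suffices to prove $H^1(X,\sI G)=0$, since then $H^0(G)\to H^0(G\otimes\sO_X/\sI)$ is onto. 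I would first record the base case $r=0$, where $\sI=\mathfrak{m}_{x_1}$ and $H^1(X,\mathfrak{m}_{x_1}F)=0$ follows from global generation of $F$ (valid because $F$ is $0$-regular) together with $H^1(X,F)=0$ (a $0$-regular sheaf being $1$-regular). The whole argument rests on the standard properties of $0$-regular sheaves with respect to a base-point-free $L$: such an $F$ is globally generated, $F\otimes L$ is again $0$-regular, and the multiplication maps $H^0(F\otimes L^{\otimes a})\otimes H^0(L^{\otimes b})\to H^0(F\otimes L^{\otimes(a+b)})$ are surjective.

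For the inductive step I would choose one point, say $x_1$, and a section $s\in H^0(X,L)$ vanishing at $x_1$, and form $\sI'=\mathfrak{m}_{x_1}^{r_1}\prod_{i\ge 2}\mathfrak{m}_{x_i}^{r_i+1}$, whose associated scheme has jet-sum $r$ and satisfies $s\cdot\sI'\subseteq\sI$. Multiplication by $s$ then gives an injection $F\otimes L^{\otimes(r-1)}\otimes\sI'\hookrightarrow \sI G$ whose cokernel is supported on the divisor $D=\operatorname{div}(s)$. Because the jet-sum of $\sI'$ is exactly $r$ and $F$ is $0$-regular, the theorem at level $r-1$ gives $H^1\big(X,(F\otimes L^{\otimes(r-1)})\otimes\sI'\big)=0$, so the long exact sequence reduces the desired vanishing to killing the $H^1$ of this cokernel, a sheaf living on the $(n-1)$-dimensional scheme $D$. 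Restricting to $D$, the sheaf $F|_D$ is $0$-regular with respect to $L|_D$, and the fat-point data of $\sI$ restricts to jet data on $D$ whose jet-sum has dropped by one; this sets up a simultaneous downward induction on $\dim X$, with $\dim X=0$ trivial.

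The main obstacle is the selection of $s$ and the bookkeeping of this cokernel, and here two difficulties must be confronted, both of which I would resolve by passing through the finite morphism $\phi=\phi_{|L|}\colon X\to\mathbb{P}^N$ determined by $L$ (finite because $L$ is ample and base-point-free). First, since $L$ need not be very ample, a section of $L$ vanishing at $x_1$ may be forced to vanish at another $x_j$ lying in the same fibre of $\phi$; and the characteristic-free existence of a section whose divisor is a non-zero-divisor meeting the fat points in the expected way cannot rely on generic smoothness. Pushing forward by $\phi$ turns $F$ into a $0$-regular sheaf $\sG=\phi_*F$ on $\mathbb{P}^N$ with respect to the very ample $\sO(1)$, preserves all the relevant cohomology since $\phi$ is finite, and merges any points sharing a fibre into a single fat point; on $\mathbb{P}^N$ one has an abundance of hyperplane sections through a prescribed point and avoiding any other finite set, and a general such hyperplane is automatically a non-zero-divisor. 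Thus I expect the genuine content to be the characteristic-free hyperplane induction carried out on $\mathbb{P}^N$ for the pushed-forward jet data $\phi_*(F\otimes\sO_X/\sI)$, the delicate point being to track how the invariant $\sum_{i}(r_i+1)=r+1$ is transported to $\mathbb{P}^N$ and made to decrease by exactly one under each hyperplane restriction.
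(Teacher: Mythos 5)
You have proposed a correct strategy, but it is genuinely different from the paper's. The paper never leaves $X$ except to produce sections (Lemma \ref{lem_general_section}); its engine is Theorem \ref{thm_regular}, which says that $m_x\otimes F\otimes L$ is again $0$-regular, proved by tensoring $F$ with a full Koszul complex of $n$ sections of $L$ through $x$ whose cohomology sheaves are punctual (Lemma \ref{Koszul complex}) and chasing regularity; iterating gives $0$-regularity, hence global generation (Theorem \ref{thm_Mumford}), of $m_{x_1}\cdots m_{x_l}\cdot F\otimes L^{\otimes l}$ for all $l\le r$, and the snake-lemma induction of Lemma \ref{lem_separate_jets} converts this into simultaneous generation of $r$-jets. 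You instead reduce to the single vanishing $H^1(X,\sI\cdot(F\otimes L^{\otimes r}))=0$, transport the whole problem to $\mathbb{P}^N$ by the finite map $\phi=\phi_{|L|}$ (this is legitimate: $\phi_\ast$ is exact, $\phi_\ast F$ is $0$-regular by the projection formula, and $\prod_i m_{\phi(x_i)}^{r_i+1}\cdot\phi_\ast G\subseteq\phi_\ast(\sI G)$ with both quotients punctual, so the jet-sum $r+1$ is preserved under merging of fibres), and then peel off one hyperplane at a time instead of using all $n$ sections at once. Your route buys elementarity: only $H^1$ must be controlled, and on $\mathbb{P}^N$ a general hyperplane through $\phi(x_1)$ avoids the other points and every positive-dimensional associated subvariety, so all kernels and error terms are punctual and invisible to $H^1$. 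The paper's route buys the stronger intermediate $0$-regularity statements, which it reuses verbatim in the positive-characteristic argument (Theorem \ref{thm_Frobenius_sepajets}). Both proofs ultimately rest on the same principle that punctual error sheaves do not affect the relevant cohomology.

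Two points in your sketch need repair when written out, though neither is fatal. First, the induction must run on the strengthened statement ``$H^1(\sJ\cdot(\sG_0\otimes\sO(t)))=0$ whenever $\sG_0$ is $0$-regular and the jet-sum of $\sJ$ is at most $t+1$,'' not on the theorem itself: surjectivity of jet evaluation at level $r-1$ does not yield the vanishing $H^1$ of $\sI'\cdot(F\otimes L^{\otimes(r-1)})$ that you invoke (your base case does prove the strengthened form, so the fix is only to make this the inductive hypothesis). Second, the bookkeeping under restriction is not that the jet-sum drops by one: if $l=1$, the restricted multiplicity at $\phi(x_1)$ on the hyperplane $H$ is still $r+1$. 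What drops by one is the jet-sum in the passage $\sI\mapsto\sI'$ at fixed dimension; under restriction to $H$ the jet-sum merely stays $\le r+1$ while the dimension drops, and the cokernel of multiplication by $s$ is an extension of $\bar m_{\phi(x_1)}^{r_1+1}\cdot(\sG|_H)$ by a sheaf supported at the single point $\phi(x_1)$. With the double induction on (dimension, jet-sum) organized this way, your argument closes.
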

A coherent sheaf $F$ on $X$ is $0$-regular with respect to $L$ if
$H^i(X,F\otimes L^{\otimes (-i)})=0,\ \forall i>0$.

As applications of this theorem, we prove the following Fujita-type results:
\begin{cor}\label{thm_Fujita}[Theorem \ref{thm_log_Fujita}]
Let $X$ be a projective smooth variety over $k$ ($\textrm{char}(k)=p\geq0$) and $D$ be a normal crossing divisor on $X$. Let $L$ be an ample and globally generated line bundle and $A$ be an ample line bundle on $X$, then
$\omega_X\otimes\sO_X(D)\otimes A\otimes L^{\otimes m}$ simultaneously generates  $r$-jets whenever $m\geq n+r$.
\end{cor}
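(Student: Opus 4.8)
The plan is to deduce the statement directly from the regularity criterion in Theorem~\ref{intro_thm_regular_sepajets}, so that the whole problem collapses to a single cohomology vanishing. Set
\[
F:=\omega_X\otimes\sO_X(D)\otimes A\otimes L^{\otimes(m-r)},
\]
so that $F\otimes L^{\otimes r}=\omega_X\otimes\sO_X(D)\otimes A\otimes L^{\otimes m}$ is exactly the bundle we want to control. By Theorem~\ref{intro_thm_regular_sepajets} it is enough to prove that $F$ is $0$-regular with respect to $L$, i.e. that
\[
H^i\bigl(X,\,\omega_X\otimes\sO_X(D)\otimes A\otimes L^{\otimes(m-r-i)}\bigr)=0\qquad\text{for all }i>0.
\]
For $i>n$ this is automatic by dimension, and for $1\le i\le n$ the hypothesis $m\ge n+r$ gives $m-r-i\ge n-i\ge0$, so $L^{\otimes(m-r-i)}$ is globally generated, hence nef; tensoring the nef bundle $L^{\otimes(m-r-i)}$ with the ample bundle $A$ shows that $A\otimes L^{\otimes(m-r-i)}$ is ample. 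Note that the top degree $i=n$ is exactly the tight case: it is the bound $m\ge n+r$ that keeps the $L$-power nonnegative all the way up to $i=n$.

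Thus everything reduces to the log-type vanishing $H^i\bigl(X,\omega_X\otimes\sO_X(D)\otimes N\bigr)=0$ for $i>0$ and $N$ ample, applied with $N=A\otimes L^{\otimes(m-r-i)}$. In characteristic $0$ this is precisely the Akizuki--Kodaira--Nakano/Esnault--Viehweg logarithmic vanishing for the log canonical sheaf $\omega_X\otimes\sO_X(D)$ twisted by an ample bundle, so the corollary follows immediately there. By Serre duality the statement is equivalent to the vanishing of the low-degree cohomology $H^{<n}\bigl(X,\sO_X(-D)\otimes N^{-1}\bigr)$ of a negative twist, which is the usual Kodaira-vanishing shape.

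The main obstacle is that this vanishing genuinely fails for general smooth $X$ in positive characteristic, so ampleness of the twist alone is not enough and one must use that $L$ is globally generated. The route I would take is to realize $X$ as a finite flat cover: a generic projection of the morphism attached to $|L|$ gives a finite surjection $f\colon X\to\mathbb{P}^n$ with $f^{*}\sO_{\mathbb{P}^n}(1)\cong L$ (finiteness uses that $L$ is ample and globally generated, flatness that $X$ is Cohen--Macaulay of dimension $n$). Since $f$ is affine, $H^i(X,-)=H^i(\mathbb{P}^n,f_*-)$, and the projection formula turns the required vanishings for all $j\ge0$ into the single assertion that $f_*\bigl(\omega_X\otimes\sO_X(D)\otimes A\bigr)$ is $0$-regular on $\mathbb{P}^n$. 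Proving this regularity in positive characteristic is the crux: here one can no longer invoke Kodaira vanishing---naive Serre duality merely loops back to the original statement---and must instead exploit Frobenius and the trace map of the finite cover $f$, in the spirit of the characteristic-$p$ Fujita results of Smith and Keeler. This last step is where I expect all the real difficulty to sit; the rest is the bookkeeping above.
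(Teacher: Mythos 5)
Your characteristic-$0$ argument is correct and is essentially the paper's own: reduce to $0$-regularity of $\omega_X\otimes\sO_X(D)\otimes A\otimes L^{\otimes(m-r)}$ via Theorem \ref{intro_thm_regular_sepajets}, and get the required vanishings from logarithmic Kodaira/Norimatsu vanishing, using $m\geq n+r$ to keep the $L$-power nonnegative (this is Remark \ref{rmk_listof0regular}(1)). But the statement is asserted for $\textrm{char}(k)=p\geq 0$, and for $p>0$ your proposal has a genuine gap: you explicitly defer ``the crux'' to an unspecified Frobenius/trace argument, and the intermediate target you set up is the wrong one. Pushing forward to $\mathbb{P}^n$ by a finite cover $f$ with $f^*\sO(1)\cong L$ and asking for $0$-regularity of $f_*(\omega_X\otimes\sO_X(D)\otimes A)$ is, by the projection formula, \emph{exactly} the vanishing $H^i(X,\omega_X\otimes\sO_X(D)\otimes A\otimes L^{\otimes(n-i)})=0$ for $i>0$ that you started from; the reduction is a tautology, and this Kodaira-type vanishing is precisely what fails in positive characteristic (Raynaud-type examples), so no amount of bookkeeping on $\mathbb{P}^n$ can close it. The paper's point is that one must \emph{never} prove $0$-regularity of the sheaf $G=\omega_X\otimes\sO_X(D)$ itself in characteristic $p$.

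What the paper actually does (Theorem \ref{thm_Frobenius_sepajets} plus Theorem \ref{thm_log_Fujita}) has two ingredients your sketch is missing. First, a concrete surjection $\textrm{tr}\colon F_{X*}(\omega_X\otimes\sO_X(D))\to \omega_X\otimes\sO_X(D)$, namely the logarithmic Cartier operator $C'$, written out in local coordinates where $D=\{x_1\cdots x_r=0\}$. Second, the descent mechanism: for $N'\gg 0$ the sheaf $F_{X*}^{N'}G\otimes L^{\otimes n}\otimes A$ \emph{is} $0$-regular, because the projection formula converts the fixed twist into $L^{\otimes p^{N'}(n-i)}\otimes A^{\otimes p^{N'}}$ and Serre vanishing applies; then Theorem \ref{thm_regular} and Theorem \ref{thm_Mumford} give global generation of $m_{x_1}\cdots m_{x_l}\cdot F_{X*}^{N'}G\otimes L^{\otimes(n+l)}\otimes A$, and the iterated surjective trace $F_{X*}^{kN}G\to G$ transports this \emph{global generation} (a surjectivity statement, not a cohomology vanishing) down to $G$ itself, after which Lemma \ref{lem_separate_jets} concludes. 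So Frobenius is not used to prove the vanishing you reduced to---that vanishing is simply bypassed. Identifying ``Frobenius and the trace map of the finite cover $f\colon X\to\mathbb{P}^n$'' as the tool is also off target: the relevant trace is that of the absolute Frobenius of $X$ (the Cartier operator), not of the covering map $f$.
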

The bound $n+r$ in the theorem is optimistic, as $A=L=\sO(1)$ on $\mathbb{P}^n$ ($D=\emptyset$) shows.

In characteristic 0, the cases when $r=0,1$ and $D$ is empty were known as a consequence of the Kodaira vanishing theorem (\cite{Lazarsfeld2004}, Example 1.8.23).

In characteristic $p>0$, our result is a generalization of of the Fujita type theorems on $0,1$-jets of K. E. Smith and D. S. Keeler (\cite{Keeler2008,Smith1997}).

\begin{cor}\label{cor_klt_Fujita}[Remark \ref{rmk_listof0regular} (3)]
Assume that $(X,\Delta)$ is a projective klt pair over $k$ with $\textrm{char}(k)=0$ such that $K_X+\Delta$ is numerically equivalent to a Cartier divisor. Let $L$ be an ample and globally generated line bundle, then
$\sO_X(K_X+\Delta)\otimes L^{\otimes m}$ simultaneously generates $r$-jets whenever $m\geq n+r+1$.
\end{cor}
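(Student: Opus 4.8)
The plan is to deduce this from the main technical result, Theorem~\ref{intro_thm_regular_sepajets}, by exhibiting a suitable $0$-regular sheaf. For $m\ge n+r+1$ I would factor the bundle in question as
\[
\sO_X(K_X+\Delta)\otimes L^{\otimes m}\;=\;F\otimes L^{\otimes r},\qquad F:=\sO_X(K_X+\Delta)\otimes L^{\otimes(m-r)},
\]
noting that $m-r\ge n+1$. It then suffices to verify that $F$ is $0$-regular with respect to $L$: granting this, Theorem~\ref{intro_thm_regular_sepajets} immediately gives that $F\otimes L^{\otimes r}=\sO_X(K_X+\Delta)\otimes L^{\otimes m}$ simultaneously generates $r$-jets, which is exactly the assertion.

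Since $K_X+\Delta$ is only assumed numerically equivalent to a Cartier divisor, I would first fix a Cartier divisor $D$ with $D\equiv K_X+\Delta$ and read $\sO_X(K_X+\Delta)$ as the line bundle $\sO_X(D)$. Unwinding the definition of $0$-regularity, the task reduces to checking
\[
H^i\bigl(X,\;\sO_X(D)\otimes L^{\otimes(m-r-i)}\bigr)=0\qquad\text{for all } i>0.
\]
For $i>n=\dim X$ this is automatic by Grothendieck vanishing, so only the range $1\le i\le n$ is at stake.

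The key input for $1\le i\le n$ is Kawamata--Viehweg vanishing for klt pairs, which is where the hypotheses $\textrm{char}(k)=0$ and kltness enter. Setting $N:=D+(m-r-i)L$, a Cartier divisor, the relation $D\equiv K_X+\Delta$ gives $N-(K_X+\Delta)\equiv(m-r-i)L$. The bound $m\ge n+r+1$ forces $m-r-i\ge n+1-i\ge 1$, so $(m-r-i)L$ is ample, hence nef and big; as nefness and bigness depend only on the numerical class, $N-(K_X+\Delta)$ is itself nef and big. Kawamata--Viehweg then yields $H^i(X,\sO_X(N))=H^i\bigl(X,\sO_X(D)\otimes L^{\otimes(m-r-i)}\bigr)=0$, establishing the $0$-regularity of $F$ and finishing the argument.

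I do not expect a serious obstacle once Theorem~\ref{intro_thm_regular_sepajets} is available: the whole content is the cohomological vanishing above. The two points requiring a little care are that the vanishing must be run with numerical rather than linear equivalence, and that the numerical bound $m\ge n+r+1$ is precisely what keeps the twist $L^{\otimes(m-r-i)}$ ample through the critical top-degree case $i=n$. Conversely, the failure of Kawamata--Viehweg in positive characteristic is exactly why this corollary is confined to $\textrm{char}(k)=0$, in contrast to the characteristic-free Corollary~\ref{thm_Fujita}.
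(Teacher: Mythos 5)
Your proposal is correct and takes essentially the same route as the paper: Remark \ref{rmk_listof0regular}(3) likewise invokes Kawamata--Viehweg vanishing for klt pairs (in its numerical-equivalence form from \cite{Kollar-Mori1998}) to conclude that $\sO_X(K_X+\Delta)\otimes L^{\otimes(m-r)}$ is $0$-regular with respect to $L$, and then feeds this into Theorem \ref{thm_regular_sepajets}. The only difference is that you spell out the details the paper leaves implicit --- replacing $K_X+\Delta$ by a Cartier representative, the Grothendieck vanishing for $i>n$, and the check that $m\geq n+r+1$ keeps the twist $L^{\otimes(m-r-i)}$ ample in the relevant range of $i$.
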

Combined with Theorem \ref{thm_siu}, we are able to show the following effective results on the generation of jets.
\begin{cor}
Let $X$ be a complex projective manifold of dimension $n$ and $D$ be a normal crossing divisor. Let $L$ be an ample line bundle on $X$. Then for every $m\geq \frac{1}{2}(n^2+n+2)$, then $(\omega_X\otimes\sO_X(D)\otimes L^{\otimes m})^{\otimes (n+r+1)}$ generates simultaneous $r$-jets.
\end{cor}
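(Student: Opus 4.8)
The plan is to feed an auxiliary positive line bundle into Theorem \ref{intro_thm_regular_sepajets}. Write $N:=\omega_X\otimes\sO_X(D)\otimes L^{\otimes m}$, so that the bundle in question is $N^{\otimes(n+r+1)}$. The point is the clean bookkeeping $N^{\otimes(n+r+1)}=N^{\otimes(n+1)}\otimes N^{\otimes r}$: if $N$ is globally generated and the sheaf $F:=N^{\otimes(n+1)}$ is $0$-regular with respect to $N$, then Theorem \ref{intro_thm_regular_sepajets}, applied with the globally generated bundle $N$ in place of $L$, shows that $F\otimes N^{\otimes r}=N^{\otimes(n+r+1)}$ simultaneously generates $r$-jets. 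This matches the shape of Corollary \ref{thm_Fujita} read with $L=N$ and $A=L^{\otimes m}$, since $N^{\otimes(n+r+1)}=\omega_X\otimes\sO_X(D)\otimes L^{\otimes m}\otimes N^{\otimes(n+r)}$ with $n+r\geq n+r$. Thus everything reduces to two facts about $N$: that it is globally generated, and that it is positive enough for the requisite vanishing.

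The global generation of $N$ is where Theorem \ref{thm_siu} enters. Applying it with a single point (the case of one point in the notation there) shows that $\omega_X\otimes L^{\otimes m}$ generates $0$-jets at every point, i.e. is globally generated, precisely under $m\geq\tfrac12(n^2+n+2)$; this explains the numerology of the bound. In particular $P:=\omega_X\otimes L^{\otimes m}$ is nef, and since $m$ is large it is also big, so $N=P\otimes\sO_X(D)$ is nef and big. To promote global generation from $P$ to $N$ I would use the restriction sequence $0\to P\to N\to N|_D\to0$, obtained by tensoring $0\to\sO_X\to\sO_X(D)\to\sO_X(D)|_D\to0$ with $P$. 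Kodaira vanishing gives $H^1(X,P)=0$, so $H^0(X,N)\twoheadrightarrow H^0(D,N|_D)$; off $D$ multiplication by the tautological section identifies $N$ with $P$, while on $D$ adjunction gives $N|_D=\omega_D\otimes(L|_D)^{\otimes m}$, and one concludes by induction on $\dim X$ using the singular (dualizing-sheaf) form of the main theorem on the normal crossing divisor $D$; the bound is monotone in $n$, so $m\geq\tfrac12(n^2+n+2)$ remains admissible after restriction.

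It remains to verify the $0$-regularity $H^i(X,N^{\otimes(n+1-i)})=0$ for all $i>0$. For $i>n$ this is vanishing above the dimension; for $1\leq i\leq n$ set $j=n+1-i\in\{1,\dots,n\}$ and write $N^{\otimes j}=\omega_X\otimes\sO_X(D)\otimes\bigl(N^{\otimes(j-1)}\otimes L^{\otimes m}\bigr)$. The bracketed factor is nef (since $N$ is nef and $L$ is ample) and big (because of the ample factor $L^{\otimes m}$), so the logarithmic Kawamata–Viehweg (Norimatsu) vanishing theorem yields $H^i(X,N^{\otimes j})=0$ for every $i>0$, in particular for $i=n+1-j$. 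This gives the $0$-regularity and closes the reduction.

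The hard part is the global generation of $N$: passing from the global generation of $\omega_X\otimes L^{\otimes m}$ furnished by Theorem \ref{thm_siu} to that of the twisted bundle $N=\omega_X\otimes\sO_X(D)\otimes L^{\otimes m}$, which forces one to control the restriction $N|_D=\omega_D\otimes(L|_D)^{\otimes m}$ on the possibly reducible, non-normal divisor $D$ through its dualizing sheaf — exactly the singular situation the paper is built to handle. A secondary subtlety is that at the exact bound $m=\tfrac12(n^2+n+2)$ Theorem \ref{thm_siu} only yields global generation, so $N$ is in general only nef and big rather than ample; this is why the argument is arranged to invoke Theorem \ref{intro_thm_regular_sepajets} through the globally generated bundle $N$ together with the vanishing above, rather than by attempting to exhibit $N$ as ample.
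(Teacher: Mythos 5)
Your overall reduction---set $N=\omega_X\otimes\sO_X(D)\otimes L^{\otimes m}$, use Theorem \ref{thm_siu} with one point to get global generation, and then feed $N$ itself into Theorem \ref{intro_thm_regular_sepajets} (equivalently Corollary \ref{thm_Fujita} with $L$ replaced by $N$ and $A=L^{\otimes m}$)---is exactly the reduction the paper intends by the phrase ``combined with Theorem \ref{thm_siu}''. But your execution has a structural error: you deliberately arrange the argument so as never to prove $N$ ample, asserting in your last paragraph that globally generated plus nef and big is enough. It is not. Theorem \ref{intro_thm_regular_sepajets}, Corollary \ref{thm_Fujita}, Mumford's Theorem \ref{thm_Mumford}, and the whole regularity framework of the paper hypothesize an \emph{ample} and globally generated auxiliary bundle, and ampleness is not a removable decoration: the proof runs through Lemma \ref{Koszul complex}, which must produce sections of $N$ through a given point $x$ whose common zero locus is zero-dimensional. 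If $N$ is globally generated but not ample, the induced morphism $\phi:X\rightarrow\mathbb{P}(H^0(X,N))$ contracts some curve $C$, and every section of $N$ vanishing at a point $x\in C$ vanishes along all of $C$; hence no such sections exist, and Theorem \ref{thm_regular} and everything downstream collapse. Relatedly, your justification of nefness of $N$ is a non sequitur: $P$ nef and $D$ effective do \emph{not} imply $P\otimes\sO_X(D)$ nef, since curves inside $D$ may be $\sO_X(D)$-negative; this also undermines your verification of $0$-regularity of $N^{\otimes(n+1)}$, which uses that nefness. (Ampleness of $N$ does hold for $n\geq 2$, but it needs adjunction along the strata of $(X,D)$ together with Fujita/cone-theorem bounds; note that at $n=1$ the corollary itself degenerates: $X=\mathbb{P}^1$, $D=\emptyset$, $L=\sO(1)$, $m=2$ gives $N=\sO_{\mathbb{P}^1}$.)

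The second genuine gap is the one you yourself call ``the hard part'': global generation of $N$ when $D\neq\emptyset$. Your induction requires effective global generation of $N|_D=\omega_D\otimes(L|_D)^{\otimes m}$ on the singular, non-normal, possibly reducible variety $D$, where $L|_D$ is ample but \emph{not} known to be globally generated. No tool you invoke covers this situation: Theorem \ref{thm_siu} requires a smooth ambient variety, while every singular-variety result in the paper (Corollary \ref{cor_intro}, Theorem \ref{thm_regular_sepajets} applied on $D$, etc.) requires the ample bundle to be globally generated as well---precisely the hypothesis unavailable for $L|_D$. So the inductive step does not close, and ``the singular (dualizing-sheaf) form of the main theorem'' is not actually available to you. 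For comparison, the paper gives no written proof of this corollary at all; its implicit argument is complete only for $D=\emptyset$, where Theorem \ref{thm_siu} gives global generation of $N=\omega_X\otimes L^{\otimes m}$, nefness of $K_X+(n+1)L$ gives ampleness for $n\geq2$, and then the decomposition $N^{\otimes(n+r+1)}=\omega_X\otimes\sO_X(D)\otimes N^{\otimes(n+r)}\otimes L^{\otimes m}$ together with Theorem \ref{thm_log_Fujita} finishes. Your proposal correctly isolates the pressure points of the $D\neq\emptyset$ case but does not resolve them; doing so honestly seems to require effective freeness for log canonical pairs, which lies outside both your argument and the paper's toolkit.
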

\begin{cor}
Let $X$ be a complex projective manifold of dimension $n$ and $D$ be a normal crossing divisor. Suppose that the log canonical line bundle $\omega_X\otimes\sO_X(D)$ is ample, then $(\omega_X\otimes\sO_X(D))^{\otimes m}$ generates simultaneous $r$-jets (i.e. $r$-ample) whenever $m\geq\frac{1}{2}(n^2+n+4)(n+r+1)$.
\end{cor}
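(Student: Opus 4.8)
The plan is to reduce the statement to the logarithmic Fujita-type bound of Corollary \ref{thm_Fujita} by writing each power of $M:=\omega_X\otimes\sO_X(D)$ (which is ample by hypothesis) in the shape $\omega_X\otimes\sO_X(D)\otimes A\otimes L^{\otimes(n+r)}$, with $L$ ample and globally generated and $A$ ample. Write $c:=\tfrac12(n^2+n+2)$, so that the asserted threshold is $\tfrac12(n^2+n+4)(n+r+1)=(c+1)(n+r+1)$; both $c$ and $c+1$ are integers since $n^2+n$ is even. The single globally generated bundle I will use is $L:=M^{\otimes(c+1)}$, and the ample factor $A$ will just be a remaining positive power of $M$.

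The key step is to verify that $L=M^{\otimes(c+1)}=\omega_X\otimes\sO_X(D)\otimes M^{\otimes c}$ is globally generated. When $D=\emptyset$ this is exactly Siu's global generation bound, namely Theorem \ref{thm_siu} with $r=1$ (separation of $0$-jets at a single point, whose threshold is $\tfrac12(n^2+n+2)=c$), applied to the ample bundle $M$ with exponent $c$. For a general normal crossing $D$ I would invoke the logarithmic form of this estimate, i.e. that $\omega_X\otimes\sO_X(D)\otimes N^{\otimes c}$ is globally generated for every ample $N$; taking $N=M$ yields the claim. This sharp log global generation is the one genuinely analytic input, and is where $\mathrm{char}(k)=0$ is used; the remainder of the argument is formal.

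Granting this, fix any integer $m\geq(c+1)(n+r+1)$ and set $A:=M^{\otimes(m-1-(c+1)(n+r))}$. Its exponent satisfies $m-1-(c+1)(n+r)\geq(c+1)(n+r+1)-1-(c+1)(n+r)=c\geq1$, so $A$ is a positive power of the ample $M$, hence ample, while $L=M^{\otimes(c+1)}$ is ample and globally generated by the previous paragraph. Counting exponents gives $\omega_X\otimes\sO_X(D)\otimes A\otimes L^{\otimes(n+r)}=M^{\otimes m}$. Corollary \ref{thm_Fujita}, applied with this $L$, this $A$, and with its exponent parameter taken to be $n+r$, then shows that $M^{\otimes m}$ simultaneously generates $r$-jets. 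As $m$ was arbitrary in the stated range, this is the assertion.

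I expect the only real obstacle to be the globally generated step for the small power $M^{\otimes(c+1)}$: one must produce global generation of $\omega_X\otimes\sO_X(D)\otimes M^{\otimes c}$ with the precise constant $c=\tfrac12(n^2+n+2)$ in the presence of the boundary $D$. Carrying this constant through the logarithmic setting, rather than using the weaker bound one would get by first making $M$ very ample, is exactly what pins down the final coefficient $\tfrac12(n^2+n+4)$; I would isolate it as a lemma and deduce it from Theorem \ref{thm_siu} together with a standard log-resolution and vanishing reduction. Everything afterwards — the choice of $A$, the exponent bookkeeping, and the appeal to Corollary \ref{thm_Fujita} — is routine, and in particular already delivers the result for \emph{all} $m$ in the stated range, not merely for multiples of $n+r+1$ (which is all that the preceding corollary with $L=M$ would give on its own).
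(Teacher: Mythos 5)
Your overall architecture is the right one, and it is what the paper evidently intends (the corollary is announced as a combination of Theorem \ref{thm_siu} with Corollary \ref{thm_Fujita}): the exponent bookkeeping in your last paragraph is correct, and your observation that one must establish global generation of the single power $M^{\otimes(c+1)}$ in order to cover \emph{every} $m\geq (c+1)(n+r+1)$ --- rather than only the multiples of $n+r+1$ obtained by quoting the preceding corollary with $L=M$ --- is a genuine and necessary point. When $D=\emptyset$ your argument is complete.

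However, the step you defer, namely global generation of $L=M^{\otimes(c+1)}=\omega_X\otimes\sO_X(D)\otimes M^{\otimes c}$ when $D\neq\emptyset$, is a genuine gap, and it cannot be closed by ``Theorem \ref{thm_siu} together with a standard log-resolution and vanishing reduction'' (note that $X$ is smooth and $D$ is already normal crossing, so a log resolution changes nothing). There are two concrete obstructions. First, you cannot feed the bundle $\sO_X(D)\otimes M^{\otimes c}$ into Theorem \ref{thm_siu} as the ample bundle, because it need not be ample or even nef: take $X\subset\mathbb{P}^3$ a smooth surface of large degree $d$ containing a smooth genus-$2$ curve $C$ of degree $e$, and $D=C$; then $M=\omega_X\otimes\sO_X(C)$ is ample by Nakai--Moishezon ($M\cdot C=2$ by adjunction, $M\cdot C'\geq (d-4)\deg C'>0$ for every curve $C'\neq C$, and $M^2>0$), while $\bigl(D+cM\bigr)\cdot C=C^2+2c=2-(d-4)e+2c<0$ once $(d-4)e>2c+2$. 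Second, at a point $x\in D$ the pair $(X,D)$ is log canonical but not klt, so neither Theorem \ref{thm_siu} nor Koll\'ar's klt generalization \cite{Kollar1997} applies there; the usual perturbation $(X,(1-\epsilon)D)$ reduces to the klt case only at points off $D$, so points lying on $D$ are the essential difficulty. What your lemma actually requires is an Angehrn--Siu type basepoint-freeness theorem for log canonical pairs (such a theorem was proved by O.~Fujino, via vanishing theorems for lc pairs that go well beyond Nadel/Kawamata--Viehweg); it is a substantive independent result, not a formal consequence of Theorem \ref{thm_siu}. Since the paper states this corollary without proof, its implicit argument rests on the same unproved logarithmic statement; in a written proof you must either invoke the lc Angehrn--Siu theorem explicitly or restrict the statement to $D=\emptyset$.
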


If $X$ is singular, there are two candidates for the canonical bundle: the cohomology sheaves of dualizing complex and the Grauert-Riemenschneider canonical sheaf.

\begin{cor}\label{cor_intro}
Let $X$ be an $n$-dimensional projective variety over $k$ and $L$ be an ample and globally generated line bundle on $X$. Let $\omega^\bullet_{X/k}$ be the dualizing complex of $X$. If one of the followings holds:
\begin{enumerate}
  \item $\textrm{char}(k)>0$ and $X$ is $F$-injective,
  \item $\textrm{char}(k)=0$ and $X$ is $F$-injective type,
\end{enumerate}
then for every integer $i$, $h^i(\omega^\bullet_{X/k})\otimes L^{\otimes m}$ simultaneously generates $r$-jets whenever $m\geq n+r+1$.
\end{cor}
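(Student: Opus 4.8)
The plan is to deduce the statement from Theorem~\ref{intro_thm_regular_sepajets} by exhibiting the sheaf in question as a twist of a $0$-regular sheaf, so that all the content is pushed into a Kodaira-type vanishing for the cohomology sheaves of the dualizing complex. Fix the integer $i$ and write $\sG=h^i(\omega^\bullet_{X/k})$. Since $m\geq n+r+1$, I would apply Theorem~\ref{intro_thm_regular_sepajets} to the sheaf $\sG\otimes L^{\otimes(m-r)}$ in the role of the $0$-regular sheaf $F$ there; then $(\sG\otimes L^{\otimes(m-r)})\otimes L^{\otimes r}=\sG\otimes L^{\otimes m}$ is exactly the sheaf whose jets we must generate. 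Thus it suffices to check that $\sG\otimes L^{\otimes(m-r)}$ is $0$-regular, i.e. that $H^j(X,\sG\otimes L^{\otimes(m-r-j)})=0$ for all $j>0$. For $j>n=\dim X$ this is automatic by Grothendieck vanishing, while for $1\leq j\leq n$ we have $m-r-j\geq(n+1)-j\geq 1$. Hence the corollary follows once one establishes
\begin{equation*}
H^j\bigl(X,\,h^i(\omega^\bullet_{X/k})\otimes L^{\otimes t}\bigr)=0\qquad\text{for all }j\geq 1\text{ and all }t\geq 1 .
\end{equation*}

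For this vanishing in characteristic $p>0$ I would exploit the Frobenius structure supplied by $F$-injectivity. The absolute Frobenius $F_X\colon X\to X$ is finite, so $(F_X)_*$ is exact and commutes with passage to cohomology sheaves; Grothendieck duality for $F_X$ yields the dual-trace map $(F_X)_*\omega^\bullet_{X/k}\to\omega^\bullet_{X/k}$, and $F$-injectivity---the injectivity of the Frobenius action on the local cohomology modules $H^k_{\mathfrak m_x}(\sO_{X,x})$---is, via local (Matlis) duality, precisely the statement that this map induces surjections $(F_X)_*\,h^i(\omega^\bullet_{X/k})\twoheadrightarrow h^i(\omega^\bullet_{X/k})$ on every cohomology sheaf. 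Tensoring with $L^{\otimes t}$ and using the projection formula together with $F_X^\ast L\cong L^{\otimes p}$ identifies $H^j(X,((F_X)_*\sG)\otimes L^{\otimes t})$ with $H^j(X,\sG\otimes L^{\otimes pt})$, while Serre vanishing gives $H^j(X,\sG\otimes L^{\otimes p^e t})=0$ for $e\gg 0$ because $L$ is ample.

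The delicate point---and what I expect to be the main obstacle---is that a surjection of sheaves need not induce a surjection on $H^j$, so the naive iteration of the trace only produces an inverse system that is eventually zero rather than an immediate vanishing. I would circumvent this by passing to the section ring $S=\bigoplus_{t\ge0}H^0(X,L^{\otimes t})$ and the graded module $M=\bigoplus_{t}H^0(X,\sG\otimes L^{\otimes t})$, for which $H^j(X,\sG\otimes L^{\otimes t})$ with $j\geq1$ is the degree-$t$ component of the local cohomology $H^{j+1}_{\mathfrak m}(M)$; here the Frobenius multiplies degrees by $p$, so its injectivity on local cohomology (the incarnation of $F$-injectivity at the vertex of the cone) annihilates all graded pieces of positive degree simultaneously. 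Making precise the comparison between $F$-injectivity of $X$ and of the cone, and tracking the grading, is the technical heart of the argument.

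Finally, for characteristic $0$ with $X$ of $F$-injective type, I would argue by reduction modulo $p$. Spreading out $X$, $L$, $\sG$ and the dualizing complex over a finitely generated $\bZ$-algebra, the hypothesis of $F$-injective type guarantees that a dense set of closed fibres $X_s$ are $F$-injective, so the characteristic-$p$ vanishing above applies fibrewise; flatness of the family together with cohomology-and-base-change then propagates the vanishing of $H^j(X_s,\sG_s\otimes L_s^{\otimes t})$ to the generic fibre, giving the required vanishing in characteristic $0$. Combining either case with the reduction of the first paragraph completes the proof.
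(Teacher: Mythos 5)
Your reduction in the first paragraph is arithmetically fine, but it commits you to proving the vanishing $H^j\bigl(X,h^i(\omega^\bullet_{X/k})\otimes L^{\otimes t}\bigr)=0$ for all $j\geq 1$, $t\geq 1$, and this is where the proposal breaks down. You locate the obstacle correctly: the trace $F_{X\ast}\sG\twoheadrightarrow\sG$ is only a surjection of sheaves, so it gives maps $H^j(X,\sG\otimes L^{\otimes pt})\to H^j(X,\sG\otimes L^{\otimes t})$ with no surjectivity, hence no vanishing. But the proposed repair does not close the gap: injectivity of the Frobenius action on the positive-degree part of $H^{j+1}_{\mathfrak{m}}(M)$ is, given Serre vanishing, \emph{equivalent} to the vanishing you are trying to prove, and it is not implied by $F$-injectivity of $X$. $F$-injectivity is a condition on the local rings $\sO_{X,x}$ at points of $X$, whereas the Frobenius action on the graded local cohomology at the vertex of the cone encodes global information and is genuinely stronger: already for a supersingular elliptic curve $E$ (smooth, hence $F$-injective) Frobenius acts by zero on $H^1(E,\sO_E)$, which is the degree-zero piece of $H^2_{\mathfrak{m}}$ of the section ring, so the cone fails to be $F$-injective. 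Thus the "comparison between $F$-injectivity of $X$ and of the cone" that you defer as the technical heart is not a technicality; it is the entire statement, and the argument is circular. Worse, the intermediate vanishing is almost certainly false in general: for smooth $X$ (automatically $F$-injective) it is exactly Kodaira vanishing for ample and globally generated line bundles in characteristic $p$, which is not a theorem --- if it were, the Fujita-type results of Smith and Keeler cited in this paper would be trivial consequences of Castelnuovo--Mumford regularity. The characteristic-zero half of your argument inherits the same gap, since it reduces to this characteristic-$p$ vanishing.

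The paper's proof avoids any vanishing statement for $\sG=h^i(\omega^\bullet_{X/k})$ itself; this is the idea your proposal is missing. In Theorem \ref{thm_Frobenius_sepajets} one applies Serre vanishing only to the Frobenius pushforwards: by the projection formula, $H^j(X,F_{X\ast}^{N}\sG\otimes L^{\otimes(n-j)}\otimes A)\simeq H^j(X,\sG\otimes L^{\otimes p^{N}(n-j)}\otimes A^{\otimes p^{N}})$, so $F_{X\ast}^{N}\sG\otimes L^{\otimes n}\otimes A$ is $0$-regular for $N\gg 0$; Theorem \ref{thm_regular} and Mumford's Theorem \ref{thm_Mumford} then make the sheaves $m_{x_1}\cdots m_{x_l}\cdot F_{X\ast}^{N}\sG\otimes L^{\otimes(n+l)}\otimes A$ globally generated. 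The key point is that global generation --- unlike cohomology vanishing --- descends along a surjection of sheaves, so the iterated trace $F_{X\ast}^{kN}\sG\twoheadrightarrow\sG$ (this is the only place $F$-injectivity is used) transfers global generation to $m_{x_1}\cdots m_{x_l}\cdot\sG\otimes L^{\otimes(n+l)}\otimes A$, and Lemma \ref{lem_separate_jets} concludes, with $A=L$ giving the stated bound $m\geq n+r+1$. If you want to salvage your write-up, replace your vanishing target by this global-generation target; the Frobenius bookkeeping you already have (projection formula plus Serre vanishing) is then exactly what is needed, and the characteristic-zero case follows by the same reduction mod $p$ applied to the jet-generation statement rather than to a vanishing statement.
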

When $\textrm{char}(k)>0$, $X$ is called $F$-injective if the trace morphism $F_\ast\omega^\bullet_{X/k}\rightarrow\omega^\bullet_{X/k}$ induces surjective morphisms on all the cohomology sheaves. When $\textrm{char}(k)=0$, $X$ is called $F$-injective type if there is a model $\sX$ of $X$ over $\spec(R)$ where $R$ is of finite type over $\mathbb{Z}$ such that infinite number of the closed fibers of $\sX$ are $F$-injective. In positive characteristic, $F$-injective singularity contains all the singularities ($F$-regular, $F$-split, $F$-rational, etc.) which have birational geometric interests. In characteristic 0, it is proved by K. Schwede \cite{Schwede2009} that $F$-injective type singularities are always du Bois, which are the natural singularities in the moduli problem of algebraic varieties. Whether the converse is true remains open. The readers can consult \cite{F-singular} for a survey of $F$-singularities.

Corollary \ref{cor_intro} has a potential application in the birational geometry of varieties with canonical singularities. Notice that when $X$ is normal, we have $$i_\ast\omega_{X_{\textrm{reg}}}\simeq h^{-\dim X}(\omega^\bullet_{X/k})$$ where $i:X_{\textrm{reg}}\rightarrow X$ is the canonical emersion. It is conjectured (c.f.\cite{Hara-Watanabe2002}, Problem 5.1.2) that log canonical singularities are always $F$-pure type (and hence $F$-injective type). If this is true, then for any projective variety $X$ with log canonical singularities and any ample and base point free line bundle $L$ on $X$, $\sO_X(K_X)\otimes L^{\otimes m}$ simultaneously generates $r$-jets whenever $m\geq n+r+1$. This is interesting because Kodaira vanishing theorem fails for log canonical varieties and Theorem \ref{intro_thm_regular_sepajets} does not work directly in this case.
%\begin{rmk}
%In the case of $\textrm{char}(k)=0$, there is no known vanishing results of $h^i(\omega^\bullet_{X/k})$. Thus the technique of reduction to positive characteristics is essential in this theorem.
%\end{rmk}

\begin{cor}\label{thm_relative_Fujita}[Remark \ref{rmk_listof0regular} (2)]
Assume $\textrm{char}(k)=0$. Let $X$ be an $n$-dimensional projective variety over $k$ and $L$ be an ample and globally generated line bundle on $X$. Let $f:Y\rightarrow X$ be a proper morphism such that $Y$ is smooth. Then for each $j$,
$R^jf_\ast\omega_Y\otimes L^{\otimes m}$ simultaneously generates  $r$-jets when $m\geq n+r+1$. Moreover, $f_\ast(\omega_Y^{\otimes l})\otimes L^{\otimes m}$ simultaneously generates  $r$-jets when $m\geq l(n+1)+r$.
\end{cor}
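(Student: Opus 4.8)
The plan is to reduce both statements to the main technical result, Theorem \ref{intro_thm_regular_sepajets}, by exhibiting the relevant sheaves as twists of $0$-regular sheaves. For the first assertion, concerning $R^jf_\ast\omega_Y$, the natural strategy is to invoke a Kodaira--Kawamata--Viehweg-type vanishing for higher direct images. Specifically, since $\textrm{char}(k)=0$, $Y$ is smooth, and $f:Y\to X$ is proper, the Koll\'ar vanishing theorem guarantees that $H^i(X, R^jf_\ast\omega_Y\otimes L')=0$ for all $i>0$ whenever $L'$ is ample (and more generally nef and big) on $X$. My intent is to apply this with $L'=L^{\otimes a}$ for suitable $a\geq 1$; because $L$ is ample and globally generated, each twist $L^{\otimes(n+1-i)}$ for $1\leq i\leq n$ is ample, so the relevant groups $H^i(X, R^jf_\ast\omega_Y\otimes L^{\otimes(n+1)}\otimes L^{\otimes(-i)})=H^i(X, R^jf_\ast\omega_Y\otimes L^{\otimes(n+1-i)})$ vanish. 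This shows that $F:=R^jf_\ast\omega_Y\otimes L^{\otimes(n+1)}$ is $0$-regular with respect to $L$, and then Theorem \ref{intro_thm_regular_sepajets} gives that $F\otimes L^{\otimes r}=R^jf_\ast\omega_Y\otimes L^{\otimes(n+1+r)}$ simultaneously generates $r$-jets. However, the stated bound is $m\geq n+r+1$, which matches $n+1+r$ exactly, so the computation must be arranged so that the $0$-regularity of $R^jf_\ast\omega_Y\otimes L^{\otimes(n+1)}$ is precisely what is needed; I would double-check the index bookkeeping here.

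For the second assertion, concerning $f_\ast(\omega_Y^{\otimes l})$, the approach is analogous but uses a vanishing theorem adapted to pluricanonical direct images. The key input should be a Koll\'ar-type vanishing for $f_\ast(\omega_Y^{\otimes l})$, or more precisely the fact that $f_\ast(\omega_Y^{\otimes l})$ is a direct summand (or behaves cohomologically like one) of a pushforward to which Koll\'ar vanishing applies after passing to an appropriate cyclic cover or Viehweg-type construction. The shift in the bound from $n+1$ to $l(n+1)$ reflects the weight $l$: one expects that $f_\ast(\omega_Y^{\otimes l})$ becomes $0$-regular only after twisting by $L^{\otimes l(n+1)}$ rather than $L^{\otimes(n+1)}$. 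Concretely, the plan is to show that $H^i(X, f_\ast(\omega_Y^{\otimes l})\otimes L^{\otimes(l(n+1)-i)})=0$ for all $i>0$, so that $f_\ast(\omega_Y^{\otimes l})\otimes L^{\otimes l(n+1)}$ is $0$-regular with respect to $L$; Theorem \ref{intro_thm_regular_sepajets} then yields simultaneous generation of $r$-jets for $f_\ast(\omega_Y^{\otimes l})\otimes L^{\otimes(l(n+1)+r)}$, matching $m\geq l(n+1)+r$.

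I anticipate that the main obstacle is establishing the correct vanishing statement for the pluricanonical sheaf $f_\ast(\omega_Y^{\otimes l})$ with the sharp twist $L^{\otimes l(n+1)}$. For $l=1$ this is the classical Koll\'ar vanishing, but for $l\geq 2$ one needs the more delicate Viehweg-type weak positivity or the vanishing theorems for $f_\ast\omega_{Y/X}^{\otimes l}$ together with the ampleness and global generation of $L$ to control the twisting. The global generation hypothesis on $L$ is what allows us to replace each intermediate power $L^{\otimes(l(n+1)-i)}$ by a nef and big, or ample, line bundle so that the vanishing applies; verifying that $l(n+1)-i\geq 1$ for all $1\leq i\leq n$ when $l\geq 1$ is routine since $l(n+1)-n\geq 1$. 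The cleanest route is likely to cite the appropriate form of Koll\'ar's or Esnault--Viehweg's vanishing as recorded in \cite{Lazarsfeld2004}, reduce to the $0$-regularity condition by the index computation above, and then apply Theorem \ref{intro_thm_regular_sepajets} directly. Both parts therefore follow once the two $0$-regularity statements are in place, with the real content residing in the vanishing theorems rather than in any new jet-separation argument.
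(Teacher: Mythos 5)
Your proposal follows the paper's own proof in essentially every respect: both assertions are reduced to $0$-regularity statements via vanishing theorems (Koll\'ar's vanishing for $R^jf_\ast\omega_Y$, and the vanishing $H^i(X,f_\ast(\omega_Y^{\otimes l})\otimes H^{\otimes m})=0$ for $i>0$, $m\geq l(n+1)-n$, $H$ ample and globally generated, for the pluricanonical pushforward), and then Theorem \ref{thm_regular_sepajets} is applied with the same index bookkeeping. The only ingredient you could not pin down --- the sharp vanishing for $f_\ast(\omega_Y^{\otimes l})$, which you correctly formulated and correctly identified as the crux --- is exactly the Popa--Schnell theorem \cite{Popa-Schnell2014} that the paper cites (it is not in \cite{Lazarsfeld2004}, and it does require the global generation of $H$, as you anticipated).
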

Let $X$ be a variety over $k$ and $\mu:Y\rightarrow X$ be a resolution of singularities, then $\mu_\ast\omega_Y$ is independent of the choice of resolutions and is called the Grauert-Riemenschneider canonical sheaf. This sheaf can be also be constructed by taking a cubical resolution of $X$ (\cite{Peters-Steenbrink2008}).

This is the relative version of Theorem \ref{thm_Fujita}. One may wonder whether the globally generating assumption of $L$ can be removed:

Let $X$ be an $n$-dimensional projective variety over $k$ and $L$ be an ample line bundle on $X$. Let $f:Y\rightarrow X$ be a proper morphism where $Y$ is a smooth variety. Does
$R^jf_\ast\omega_Y\otimes L^{\otimes (n+r+1)}$ simultaneously generate  $r$-jets, at least for $r=0,1$?

However, this fails even for $r=0$ if there is no restriction on the singularity of $X$.
Let $X$ be a cubic nodal curve in $\mathbb{P}^2$ and $f: \mathbb{P}^1\rightarrow X$ be the normalization of $X$. Let $L=\sO(P)$ where $P$ is a smooth point on $X$. Consider the exact sequence
$$H^0(X,L)\rightarrow H^0(X,L\otimes k(P))\rightarrow H^1(X,\sO_X)\rightarrow H^1(X,L).$$
Since $H^1(X,L)=H^0(X,L^{-1}\otimes\omega_X)=0$, we see that $P$ is the base point of $L$. Since $$f_*\omega_{\mathbb{P}^1}\otimes L^{\otimes 2}=f_*(\omega_{\mathbb{P}^1}\otimes f^*L^{\otimes 2})=f_*(\omega_{\mathbb{P}^1}\otimes \sO_{\mathbb{P}^1}(2))=f_*(\sO_{\mathbb{P}^1}),$$
$H^0(X,f_*\omega_{\mathbb{P}^1}\otimes L^{\otimes 2})\simeq H^0(\mathbb{P}^1,\sO_{\mathbb{P}^1})$ is 1-dimensional. Therefore $f_*\omega_{\mathbb{P}^1}\otimes L^{\otimes 2}$ is not globally generated since the fiber of $f_*\omega_{\mathbb{P}^1}\otimes L^{\otimes 2}$ at the nodal point is 2-dimensional.

%Taking the cohomologies of the sequence
%$$0\longrightarrow\sO_X\longrightarrow f_*(\sO_{\mathbb{P}^1})\longrightarrow k(Q)\longrightarrow0$$
%one get
%$$H^0(X,\sO_X)\stackrel{\alpha}{\rightarrow} H^0(X,f_*(\sO_{\mathbb{P}^1}))\stackrel{\beta}{\rightarrow} H^0(X,k(Q)).$$
%Here $Q$ is the nodal point of $X$. since $\alpha$ is an isomorphism, we obtain that $\beta=0$ and of $f_*\omega_{\mathbb{P}^1}\otimes L^2=f_*(\sO_{\mathbb{P}^1})$ do not generate 0-jets at $Q$.

The right conjecture is proposed by Y. Kawamata \cite{Kawamata2000}:
\begin{conj}
Let $f:Y\rightarrow X$ be a surjective morphism from a smooth projective variety to a smooth projective variety of dimension $n$ such that $f$ is smooth over $X_0= X\setminus B$ for a normal crossing divisor $B$ on $X$. Let $L$ be
an ample divisor on $X$. Then the locally free sheaf $R^jf_\ast\omega_Y\otimes L^{\otimes m}$ is generated by global sections if $m\geq n+1$, or $m=n$ and the intersection number $(L^n)\geq 2$  for all $j\geq 0$.
\end{conj}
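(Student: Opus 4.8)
The plan is to organize the attack around a single dichotomy --- whether the ample $L$ is base-point free --- and to recognize at the outset that what separates Kawamata's statement from the paper's Corollary \ref{thm_relative_Fujita} is precisely the difficulty of the classical Fujita conjecture. Throughout write $\mathcal{F}_j := R^jf_\ast\omega_Y$; these are the sheaves whose global generation we must control, and $\dim X = n$.

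First I would dispose of the case where $L$ is, in addition to ample, globally generated, since there the conjecture for $m \geq n+1$ is already Corollary \ref{thm_relative_Fujita} with $r=0$. The mechanism is the $0$-regularity package of Theorem \ref{intro_thm_regular_sepajets}: by Koll\'ar's vanishing theorem $H^i(X, \mathcal{F}_j \otimes A)=0$ for all $i>0$ and $A$ ample, together with the dimension vanishing $H^i(X,\mathcal{F}_j)=0$ for $i>n$, the sheaf $\mathcal{F}_j \otimes L^{\otimes(n+1)}$ is $0$-regular with respect to $L$ and is therefore globally generated; tensoring by further powers of the base-point-free $L$ preserves global generation, giving the case $m \geq n+1$. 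Hence the genuinely new content of the conjecture is the removal of the base-point-free hypothesis on $L$ and the boundary refinement $m=n$ under $(L^n)\geq 2$.

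For a general ample $L$ I would transplant the Angehrn--Siu method of singular metrics and log canonical centers from $\omega_X$ to $\mathcal{F}_j$. To generate $\mathcal{F}_j \otimes L^{\otimes m}$ at a fixed point $x$, the plan is to produce an effective $\mathbb{Q}$-divisor $D \equiv_{\mathbb{Q}} cL$ with $c<m$ whose multiplier ideal isolates $x$ as a log canonical center, cut down the center inductively in its dimension, and lift a section along it. The vanishing input required is a Nadel-type statement adapted to higher direct images, namely $H^1(X, \mathcal{F}_j \otimes L^{\otimes m} \otimes \mathcal{J}(D))=0$ whenever $L^{\otimes m}\otimes \sO_X(-D)$ is ample. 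This is where Saito's theory of Hodge modules enters: $\mathcal{F}_j$ is the lowest nonzero graded piece of a polarizable pure Hodge module, so it inherits both the Koll\'ar--Nakano type vanishing with multiplier ideals and the semipositivity (in the sense of Kawamata and Viehweg) that make the cutting-down succeed.

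The main obstacle is the \emph{linearity} of the bound, and here I expect no unconditional proof by present methods. Taking $f=\mathrm{id}$ and $Y=X$ gives $\mathcal{F}_0=\omega_X$ and $\mathcal{F}_j=0$ for $j>0$, so Kawamata's conjecture contains the global-generation half of Fujita's conjecture verbatim; consequently a proof valid in all dimensions with the stated threshold $m\geq n+1$ is out of reach as long as Fujita's conjecture remains open for $n\geq 6$. The Angehrn--Siu construction above only delivers a quadratic threshold $m\gtrsim \tfrac{1}{2}n^2$, and the equality case $m=n$ with $(L^n)\geq 2$ sits deeper still. A realistic proposal is therefore to prove the base-point-free case unconditionally as in the second paragraph, to recover the sharp linear bound in low dimensions $n\le 4$ by adapting the Reider and Kawamata arguments to the Hodge module $\mathcal{F}_j$, and otherwise to record only the quadratic bound, leaving the linear bound open exactly where the classical Fujita conjecture is.
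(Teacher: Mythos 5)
This statement is not a theorem of the paper at all: it is Kawamata's conjecture, quoted verbatim, and the paper's only comment on it is that it is known for $\dim X\leq 4$ by \cite{Kawamata2000}. There is therefore no proof in the paper to compare yours against, and your central judgement --- that a complete proof should not be attempted --- is the correct one. Your supporting analysis also checks out against what the paper does establish. The globally generated case is indeed Corollary \ref{thm_relative_Fujita} with $r=0$, by exactly the mechanism you describe: Koll\'ar's vanishing \cite{Kollar1986} gives $H^i(X,R^jf_\ast\omega_Y\otimes L^{\otimes(n+1-i)})=0$ for $1\leq i\leq n$, cohomology in degrees above $n$ vanishes for dimension reasons, so $R^jf_\ast\omega_Y\otimes L^{\otimes(n+1)}$ is $0$-regular with respect to $L$ and Theorem \ref{thm_Mumford} gives global generation (this is Remark \ref{rmk_listof0regular}(2)). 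Your observation that $f=\mathrm{id}$, $B=\emptyset$, $j=0$ recovers the global-generation half of Fujita's conjecture is also exactly right, and it is the honest explanation of why this statement appears in the paper as a conjecture rather than a corollary: every technique in the paper requires $L$ globally generated in order to run the Koszul/Castelnuovo--Mumford machinery, and no current method reaches the linear bound for arbitrary ample $L$ in all dimensions.

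Two caveats on your middle paragraph. First, the Angehrn--Siu/Hodge-module program is a plausible sketch of partial results (bounds quadratic in $n$ of this type do exist in the literature), but as written it is a research outline, not a proof: both the Nadel-type vanishing for $R^jf_\ast\omega_Y$ twisted by a multiplier ideal and the inductive lifting of sections from log canonical centers of the base would themselves need proofs, and even granting them you concede the output is quadratic, strictly weaker than the conjectured threshold; the boundary case $m=n$, $(L^n)\geq 2$ is untouched even when $L$ is base-point free. Second, make the hypothesis $\mathrm{char}(k)=0$ explicit in any partial statement you record: the paper proves (Proposition \ref{prop_contex_relative_Fujita}) that the relative statement fails in positive characteristic even for $L$ ample and globally generated on $\mathbb{P}^1_k$, so nothing in your program --- which implicitly relies on Hodge theory --- can survive reduction mod $p$, and a statement omitting the characteristic assumption would be false.
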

This conjecture is known for $\dim X\leq 4$ (\cite{Kawamata2000}).

Corollary \ref{thm_relative_Fujita} does not hold in positive characteristics. In fact, we show that (Proposition \ref{prop_contex_relative_Fujita}) given any $m\in\bN$, there is a curve fibration $X\rightarrow \mathbb{P}^1_k$ with the total space $X$ being a smooth surface, such that $f_\ast\omega_X\otimes \sO_{\mathbb{P}^1_k}(m)$ is not globally generated. The failure of Corollary \ref{thm_relative_Fujita} in positive characteristics is related to the pathological (non-positive) behaviour of the pushforward of canonical sheaves.

\section{Preliminary}
\begin{defn}[Castelnuovo-Mumford regularity]
Let $X$ be a projective variety and $L$ be an ample line bundle on $X$ which is generated by global sections. A coherent sheaf $F$ on $X$ is $m$-regular with respect to $L$ if
$$H^i(X,F\otimes L^{\otimes (m-i)})=0, \forall i>0.$$
\end{defn}

\begin{thm}[Mumford \cite{Mumford1966}]\label{thm_Mumford}
Let $F$ be a $0$-regular coherent sheaf on $X$ with respect to $L$,
then $F$ is generated by global sections.
\end{thm}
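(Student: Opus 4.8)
The plan is to deduce global generation from three facts, proved in order: (I) the $0$-regularity of $F$ propagates to the vanishings $H^i(X,F\otimes L^{\otimes s})=0$ for all $i\geq 1$ and all $s\geq -i$; (II) consequently the multiplication maps $H^0(X,F\otimes L^{\otimes t})\otimes H^0(X,L)\to H^0(X,F\otimes L^{\otimes(t+1)})$ are surjective for every $t\geq 0$; and (III) global generation then descends from a high twist $F\otimes L^{\otimes t_0}$ (which is globally generated by Serre's theorem) back down to $F$. The only place the geometry of $X$ really enters is (I), through an induction on $n=\dim X$.

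For (I) I would argue by induction on $n$, the case $n=0$ being trivial. Since $k$ is algebraically closed, I may choose $s\in H^0(X,L)$ whose divisor $H$ avoids the finitely many associated points of $F$; then $s$ is a non-zerodivisor on $F$ and I get the exact sequences $0\to F\otimes L^{\otimes(t-1)}\to F\otimes L^{\otimes t}\to F|_H\otimes L^{\otimes t}\to 0$. First, $F|_H$ is $0$-regular on $H$: in the long exact sequence for the twist by $L^{\otimes(-i)}$, the group $H^i(F|_H\otimes L^{\otimes(-i)})$ is caught between $H^i(F\otimes L^{\otimes(-i)})$ and $H^{i+1}(F\otimes L^{\otimes(-i-1)})$, both zero by hypothesis. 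By induction $F|_H$ is $m$-regular on $H$ for every $m\geq 0$. Feeding this back into the sequence for the twist by $L^{\otimes(m+1-i)}$ squeezes $H^i(F\otimes L^{\otimes(m+1-i)})$ between $H^i(F\otimes L^{\otimes(m-i)})=0$ and $H^i(F|_H\otimes L^{\otimes(m+1-i)})=0$, so $0$-regularity of $F$ upgrades to $m$-regularity for every $m\geq 0$; unwinding $m$-regularity for all $m\geq 0$ gives exactly the vanishing range in (I).

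For (II) I would use the tautological sequence attached to the base-point-free $L$, namely $0\to M_L\to H^0(X,L)\otimes\sO_X\to L\to 0$ with $M_L$ its kernel. Tensoring by $F\otimes L^{\otimes t}$ and taking cohomology identifies the cokernel of the multiplication map with a subspace of $H^1(X,M_L\otimes F\otimes L^{\otimes t})$, so it suffices to prove this group vanishes for $t\geq 0$. That is where the main difficulty sits, and I would handle it with the exterior-power (Koszul) filtration of $M_L$: the sequences $0\to\wedge^{p}M_L\to\wedge^{p}H^0(X,L)\otimes\sO_X\to\wedge^{p-1}M_L\otimes L\to 0$, suitably twisted, produce a chain of injections $H^1(M_L\otimes F\otimes L^{\otimes t})\hookrightarrow H^2(\wedge^2 M_L\otimes F\otimes L^{\otimes(t-1)})\hookrightarrow\cdots$, in which every comparison term is a sum of copies of $H^i(F\otimes L^{\otimes(t-i+1)})$ with $t-i+1\geq -i$; these vanish by (I), and the chain terminates once the cohomological degree exceeds $n$. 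Hence $H^1(M_L\otimes F\otimes L^{\otimes t})=0$ and the multiplication map is surjective. This Koszul computation is what replaces the naive attempt to restrict sections of $L$ to $H$, which fails because $H^0(X,L)\to H^0(H,L|_H)$ need not be surjective.

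Finally (III) is formal. By Serre vanishing $F\otimes L^{\otimes t_0}$ is globally generated for $t_0\gg 0$; iterating the surjections of (II) shows $H^0(X,F)\otimes H^0(X,L^{\otimes t_0})\to H^0(X,F\otimes L^{\otimes t_0})$ is surjective. Comparing the two evaluation maps fibre by fibre at a closed point $x$---and using that $L^{\otimes t_0}$ is itself globally generated, so its evaluation fills the one-dimensional fibre $L^{\otimes t_0}\otimes k(x)$---forces $H^0(X,F)\otimes\sO_X\to F$ to be surjective at $x$. As $x$ is arbitrary, $F$ is globally generated. I expect the heart of the proof to be the vanishing $H^1(M_L\otimes F\otimes L^{\otimes t})=0$ in step (II), with the dimension induction in step (I) being the necessary but more routine input.
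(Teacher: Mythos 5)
Your proof is correct, but note that the paper itself gives no argument for this statement: it is quoted as Mumford's theorem with a citation to \cite{Mumford1966}, so there is no internal proof to compare against, and the right benchmark is the classical argument (Mumford's lectures, or Lazarsfeld's exposition). Your proposal is in substance that classical proof --- propagation of $0$-regularity to $m$-regularity by induction on dimension via a divisor $H\in|L|$ avoiding the associated points of $F$, surjectivity of the multiplication maps, and the formal descent from Serre vanishing --- with one genuine variation: in step (II), Mumford and Lazarsfeld argue by restricting to $H$ and a diagram chase, confronting exactly the lifting problem you identify (the non-surjectivity of $H^0(X,L)\to H^0(H,L|_H)$ is circumvented there by using that $H^0(F)\to H^0(F|_H)$ \emph{is} surjective, since $H^1(F\otimes L^{-1})=0$, together with the inductive hypothesis on $H$), whereas you replace this by the kernel bundle $M_L$ and the filtration $0\to\wedge^{p}M_L\to\wedge^{p}H^0(L)\otimes\sO_X\to\wedge^{p-1}M_L\otimes L\to 0$ of its exterior powers. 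That route is clean and standard (it is the mechanism behind Koszul-cohomology arguments), and its advantage is that the multiplication surjectivity follows purely from the vanishings of step (I), with no restriction-and-lifting bookkeeping. Two small points to tighten: in the injection chain the relevant comparison term at stage $p$ is $H^{p-1}\bigl(F\otimes L^{\otimes(t-p+1)}\bigr)$, i.e.\ degree $i=p-1$ with twist $t-i$ (not $t-i+1$), whose vanishing by (I) needs $t\geq 0$ --- still exactly the range you claim in (II), so nothing breaks; and in step (I) the induction is best run on $\dim\operatorname{Supp}F$ (or the statement formulated for projective schemes), since $H$ need not be reduced and, unless $s$ is also chosen to avoid the generic points of $X$, need not have dimension $n-1$.
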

\begin{lem}\label{lem_general_section}
Let $X$ be an $n$-dimensional projective variety and $L$ be a globally generated ample line bundle on $X$. Let $x\in X$ be a closed point and let $Z=\bigcup Z_i$ be a subvariety of $X$ whose irreducible components $Z_i$ are of dimension  $\geq1$, then there is a global section $s\in H^0(X,L)$ such that its zero locus passes through $x$ and contains no $Z_i$.
\end{lem}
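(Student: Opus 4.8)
The plan is to realize sections of $L$ as hyperplane sections under the morphism defined by $|L|$, and thereby reduce the statement to an elementary dimension count in the dual projective space. Since $L$ is globally generated, the complete linear system $|L|$ defines a morphism $\phi\colon X\to\mathbb{P}^N$ with $N+1=\dim H^0(X,L)$ and $\phi^\ast\sO_{\mathbb{P}^N}(1)\cong L$, under which the nonzero sections $s\in H^0(X,L)$ correspond bijectively (up to scalar) to the hyperplanes $H\subset\mathbb{P}^N$, with zero locus $V(s)=\phi^{-1}(H)$. Under this dictionary the two conditions we seek become purely projective-geometric: the requirement that $V(s)$ pass through $x$ reads $\phi(x)\in H$, while the requirement that $V(s)$ contain no $Z_i$ reads $\phi(Z_i)\not\subseteq H$ (using that $\phi|_{Z_i}\colon Z_i\to\phi(Z_i)$ is surjective, so $s|_{Z_i}=0$ if and only if $\phi(Z_i)\subseteq H$).

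The first substantive point is that each $\phi(Z_i)$ is positive-dimensional, and this is exactly where ampleness enters. If $\phi$ contracted an integral curve $C\subseteq X$ to a point, then $L|_C\cong(\phi^\ast\sO_{\mathbb{P}^N}(1))|_C$ would be trivial and hence $L\cdot C=0$, contradicting the ampleness of $L$. Thus $\phi$ is quasi-finite, and being proper it is finite onto its image, so $\dim\phi(Z_i)=\dim Z_i\geq 1$ for every $i$. (In particular $\dim\phi(X)=\dim X\geq 1$, so $N\geq 1$ and there do exist hyperplanes through $\phi(x)$.)

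I would then pass to the dual projective space $(\mathbb{P}^N)^\ast$ parametrizing hyperplanes. The hyperplanes through $\phi(x)$ form a hyperplane $A\cong\mathbb{P}^{N-1}\subseteq(\mathbb{P}^N)^\ast$. For each $i$, since $\phi(Z_i)$ is positive-dimensional and irreducible it spans a linear subspace of dimension at least $1$, so the hyperplanes containing $\phi(Z_i)$ form a linear subspace $B_i\subseteq(\mathbb{P}^N)^\ast$ of dimension at most $N-2$. Because $Z$ has only finitely many irreducible components, it remains to produce a point of $A\setminus\bigcup_i B_i$.

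The dimension count finishes the argument: since $\dim B_i\leq N-2<N-1=\dim A$, each intersection $A\cap B_i$ is a proper closed subset of the irreducible variety $A$, and a finite union of proper closed subsets of an irreducible variety is again proper; as $k$ is algebraically closed (hence infinite), there is a $k$-point $H\in A\setminus\bigcup_i B_i$. The section $s$ corresponding to $H$ then vanishes at $x$ while restricting to a nonzero section on every $Z_i$, which is precisely the assertion. The only genuine obstacle is the positivity input of the second step, namely that ampleness forbids $\phi$ from collapsing the $Z_i$; once $\phi$ is known to be finite, the remainder is the routine observation that a positive-dimensional subvariety imposes at least two independent linear conditions, leaving enough hyperplanes through $\phi(x)$ to avoid all the $\phi(Z_i)$ simultaneously.
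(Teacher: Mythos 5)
Your proof is correct and takes essentially the same route as the paper's: both pass to the finite morphism $\phi\colon X\to\mathbb{P}(H^0(X,L))$ defined by the complete linear system $|L|$ and select a hyperplane through $\phi(x)$ containing no $\phi(Z_i)$. You simply make explicit the two points the paper dismisses as obvious, namely that ampleness forces $\phi$ to be finite (so each $\phi(Z_i)$ is positive-dimensional) and the dual-space dimension count showing that a general hyperplane through $\phi(x)$ avoids all the $\phi(Z_i)$.
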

\begin{proof}
Since $L$ is ample and globally generated, there is a finite morphism $\phi:X\rightarrow\mathbb{P}(H^0(X,L))$ such that $L\simeq\phi^\ast\sO_{\mathbb{P}(H^0(X,L))}(1)$. Obviously the general hyperplanes $H$ of $\mathbb{P}(H^0(X,L))$ containing $\phi(x)$ does not contain any $\phi(Z_i)$ (whose dimensions are $\geq 1$). The pullback section $\phi^\ast(H)$ satisfies the requirement of the lemma.
\end{proof}
\subsection{Koszul Complex}

If $L^{\bullet}$ and $M^{\bullet}$ are two complexes of coherent sheaves, their tensor product $L^{\bullet}\otimes_{\sO_X}M^{\bullet}$ is, by definition, the complex such that $$(L^{\bullet}\otimes M^{\bullet})^n=\bigoplus_{p+q=n}L^{p}\otimes M^{q}$$ and such that $d:(L^{\bullet}\otimes  M^{\bullet})^n\rightarrow
(L^{\bullet}\otimes M^{\bullet})^{n+1}$ is defined on $L^{p}\otimes M^{q}$
by the formula $$d(x\otimes y)=d_{L}(x)\otimes y+(-1)^px\otimes d_{ M}(y).$$
Let $L$ be a line bundle on $X$ and $H^0(X,L)\neq0$.
A non-zero section $s\in \Gamma(X, L)$ defines a 2-term complex
$$L^{\bullet}(s): \cdots\rightarrow0\rightarrow L^{-1}\stackrel{s}\rightarrow\sO_X\rightarrow0\rightarrow \cdots$$
where $L^{0}(s)=\sO_X$ and $L^{-1}(s)=L^{-1}$.
For any complex $C^{\bullet}$, we put $$L^{\bullet}(s_1,\cdots,s_n, C^{\bullet})=C^{\bullet}\otimes L^{\bullet}(s_1)\otimes\cdots\otimes L^{\bullet}(s_n).$$
If $ F$ is an $\sO_X$-module we also view it as a complex by
$ F^n=0\ (n\neq0)$ and $ F^0= F$ and we put $$L^{\bullet}(s_1,\cdots,s_n, F)=
  F\otimes L^{\bullet}(s_1)\otimes\cdots\otimes L^{\bullet}(s_n).$$
These complexes are called Koszul complexes.

For any coherent sheaf $F$ the support of $F$ is the closed subset $\text{Supp}(F)=\{x\in X\mid E_x\neq0\}$. Its dimension is called the dimension of the sheaf $F$ and is denoted by $\text{dim}(F)$.
\begin{lem}\label{Koszul complex}
Let $X$ be an $n$-dimensional projective variety and $L$ be a globally generated ample line bundle on $X$. Let $x\in X$ be a closed point. Then for any coherent sheaf $ F$ there is a sequence of global sections $s_1,\cdots,s_n \in \Gamma(X, L)$ whose common zero locus is a zero dimensional subscheme $Z$ containing $x$, and such that the cohomology sheaves of the Koszul complex
$L^{\bullet}(s_1,\cdots,s_n, F)$ are zero dimensional, i.e.
$$\text{dim}(h^i (L^{\bullet}(s_1,\cdots,s_n, F)))=0\text{ for all }i.$$
\end{lem}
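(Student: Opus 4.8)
The plan is to construct the sections $s_1,\dots,s_n$ one at a time, using Lemma \ref{lem_general_section} at each stage to reduce the dimensions of the Koszul cohomology sheaves by one. To treat the common zero locus $Z$ together with the cohomology condition, I would first replace $F$ by $F\oplus\sO_X$. Since the Koszul construction distributes over direct sums,
$$L^\bullet(s_1,\dots,s_n,F\oplus\sO_X)=L^\bullet(s_1,\dots,s_n,F)\oplus L^\bullet(s_1,\dots,s_n,\sO_X),$$
and $h^0(L^\bullet(s_1,\dots,s_n,\sO_X))=\sO_X/(s_1,\dots,s_n)=\sO_Z$, a bound on all cohomology sheaves of the Koszul complex of $F\oplus\sO_X$ gives at once the desired statement for $F$ (as a direct summand) and the $0$-dimensionality of $Z$. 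Each $s_k$ will be chosen to vanish at $x$, so that $x\in\bigcap_k V(s_k)=Z$.

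Writing $K_k^\bullet:=L^\bullet(s_1,\dots,s_k,F)$ with $F$ now replaced by $F\oplus\sO_X$, the core is the inductive claim that $\dim h^i(K_k^\bullet)\le n-k$ for all $i$ and all $0\le k\le n$; the case $k=n$ is exactly the assertion. The base case $k=0$ holds since $h^0(K_0^\bullet)=F\oplus\sO_X$ has dimension $\le n$ and all other cohomologies vanish. For the inductive step I would tensor the flat complex $K_{k-1}^\bullet$ with the short exact sequence of complexes whose sub-object is $\sO_X$ in degree $0$, whose total object is $L^\bullet(s_k)$, and whose quotient is $L^{-1}$ in degree $-1$. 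Reading off the resulting long exact cohomology sequence and using that the connecting homomorphism is multiplication by $s_k$, one obtains for each $i$ a short exact sequence
$$0\to\mathrm{coker}\big(s_k\colon h^i(K_{k-1}^\bullet)\otimes L^{-1}\to h^i(K_{k-1}^\bullet)\big)\to h^i(K_k^\bullet)\to\ker\big(s_k\colon h^{i+1}(K_{k-1}^\bullet)\otimes L^{-1}\to h^{i+1}(K_{k-1}^\bullet)\big)\to 0.$$

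Granting the inductive hypothesis $\dim h^j(K_{k-1}^\bullet)\le n-k+1$ for all $j$, it suffices to choose $s_k$ so that both outer terms have dimension $\le n-k$. The cokernel term is supported on $V(s_k)\cap\mathrm{Supp}(h^i(K_{k-1}^\bullet))$, so its dimension drops to $\le n-k$ provided $s_k$ does not vanish identically on any positive-dimensional irreducible component of any $\mathrm{Supp}(h^i(K_{k-1}^\bullet))$. The kernel term is the $s_k$-torsion of $h^{i+1}(K_{k-1}^\bullet)$, whose support is the union of those associated subvarieties $V(\mathfrak p)$, $\mathfrak p\in\mathrm{Ass}(h^{i+1}(K_{k-1}^\bullet))$, on which $s_k$ vanishes identically; hence its dimension is $\le 0\le n-k$ provided $s_k$ does not vanish identically on any positive-dimensional associated subvariety. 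Since $K_{k-1}^\bullet$ is a bounded complex of coherent sheaves, only finitely many cohomologies are nonzero and each has finitely many components and associated primes, so the collection of all these positive-dimensional subvarieties is finite. Lemma \ref{lem_general_section} then furnishes a single $s_k\in H^0(X,L)$ vanishing at $x$ and avoiding all of them, completing the induction.

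The main obstacle is the kernel term. Controlling it is not simply a matter of slicing $\mathrm{Supp}(h^{i+1}(K_{k-1}^\bullet))$ by a hyperplane: $s_k$ is a non-zero-divisor on a stalk exactly when it avoids every associated prime there, so one must force $s_k$ off the (possibly embedded) associated subvarieties, not merely off the components of the support. This is precisely why Lemma \ref{lem_general_section}, which produces a section avoiding a \emph{prescribed} finite list of positive-dimensional subvarieties while still passing through $x$, is the correct instrument, and why the associated primes of the cohomology sheaves — rather than their supports alone — must be assembled into that list at every step.
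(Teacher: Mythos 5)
Your proof is correct, and its engine is the same as the paper's: construct the $s_k$ one at a time, tensor the partial Koszul complex with the short exact sequence $0\to\sO_X\to L^{\bullet}(s_k)\to L^{-1}[1]\to 0$, read off the long exact sequence whose connecting map is multiplication by $\pm s_k$, and use Lemma \ref{lem_general_section} to keep $s_k$ off a finite list of positive-dimensional subvarieties while still passing through $x$; your closing observation that one must avoid the associated subvarieties (including embedded ones), not merely the components of the supports, is exactly what the paper encodes by choosing sections avoiding the ``non-closed associate points.'' The differences are organizational but real. First, the invariant: the paper's induction keeps every $h^i$ with $i\neq 0$ already $0$-dimensional and identifies $h^0=F/(s_1,\cdots,s_m)F$, so each new section need only avoid the associated points of that single sheaf, and the process terminates after at most $\dim F\le n$ steps; your uniform bound $\dim h^i(K_k^{\bullet})\le n-k$ forces you to track all cohomology sheaves at every step, which costs nothing (the list stays finite) and spares you the case distinction between $h^0$ and the rest. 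Second, and more substantively, your $F\oplus\sO_X$ trick treats the $0$-dimensionality of $Z$ more robustly than the paper does. The paper's sections are chosen only against $F$-data, so $V(s_1,\cdots,s_r)$ may have positive-dimensional components away from $\operatorname{Supp}F$ (nothing in the construction prevents $V(s_1)$ and $V(s_2)$ from sharing a component disjoint from $\operatorname{Supp}F$), and the concluding ``choose general sections $s_{r+1},\cdots,s_n$'' cuts dimension by at most $n-r$, which need not reach $0$ starting from dimension as large as $n-1$; the implicit fix is to also avoid, at each step, the positive-dimensional components of $V(s_1,\cdots,s_m)$. Your device instead makes $\sO_Z=h^0(L^{\bullet}(s_1,\cdots,s_n,\sO_X))$ one of the cohomology sheaves the induction is already controlling, so $\dim Z=0$ and $x\in Z$ fall out of the same argument with no extra step.
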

\begin{proof}
If $\text{dim}( F)=0$, any sections $s_1,\cdots,s_n\in \Gamma(X, L)$ whose zero locuses intersect properly and contain $x$ will do. We assume that $\text{dim}( F)>0$. Since $ F$ is coherent, there are finitely many associate points of $ F$. By Lemma \ref{lem_general_section}, one can choose a global
sections $s_1\in H^0(X,L)$ whose zero locus $Z_1= Z(s_1)$ passes through $x$ but does not contain any
non-closed associate point of $ F$. We have the exact sequence of complexes
$$0\rightarrow\sO_X\rightarrow L^{\bullet}(s_1)\rightarrow L^{-1}[1]\rightarrow0.$$
Tensoring the above exact sequence with $F$, we get
$$0\rightarrow F\rightarrow L^{\bullet}(s_1, F)\rightarrow F\otimes L^{-1}[1]\rightarrow0,$$
and a long exact sequence
$$0\rightarrow h^{-1}(L^{\bullet}(s_1, F))\rightarrow h^0( F)\otimes L^{-1}\stackrel{\delta^0}{\rightarrow} h^0( F)\rightarrow h^0(L^{\bullet}(s_1, F))\rightarrow0.$$
The connecting homomorphism $\delta^0$ is the multiplication by $s_1$.
Hence $\dim (\ker(\delta^0))=0$. Therefore
$$\text{dim}(h^{i} (L^{\bullet}(s_1,  F)))=0,\quad i\neq0$$
and
$$h^0(L^{\bullet}(s_1,  F))= F/s_1 F.$$
If $\text{dim}(h^0 (L^{\bullet}(s_1, F)))=0$ then we finish the proof.

Now suppose that we already have a Koszul complex $L^{\bullet}(s_1,\cdots,s_m, F)$ such that
$$\text{dim}(h^{i} (L^{\bullet}(s_1,\cdots,s_m,  F)))=0,\quad i\neq0$$
and
$$h^0 (L^{\bullet}(s_1,\cdots,s_m,  F))= F/(s_1,\cdots,s_m) F.$$
If $\text{dim}(h^0 (L^{\bullet}(s_1,\cdots,s_m,  F)))\neq0$, by the similar method as above we can choose a global section $s_{m+1}\in H^0(X,L)$ whose zero locus $Z_{m+1}= Z(s_{m+1})$ passes through $x$ but does not contain any non-closed associate points of $h^0(L^{\bullet}(s_1,\cdots,s_m, F))$. Tensoring $L^{\bullet}(s_1,\cdots,s_m, F)$ with the exact sequence $$0\rightarrow\sO_X\rightarrow L^{\bullet}(s_{m+1})\rightarrow L^{-1}[1]\rightarrow0,$$
we get
$$0\rightarrow L^{\bullet}(s_1,\cdots,s_m, F)\rightarrow
L^{\bullet}(s_1,\cdots,s_{m+1}, F)\rightarrow
L^{\bullet}(s_1,\cdots,s_m, F)\otimes L^{-1}[1]\rightarrow0.$$
This induces a long exact sequence
$$\cdots\stackrel{\delta^{-p-1}}\rightarrow h^{-p-1}(L^{\bullet}(s_1,\cdots,s_m, F))\rightarrow
h^{-p-1}(L^{\bullet}(s_1,\cdots,s_{m+1}, F))\rightarrow
h^{-p}(L^{\bullet}(s_1,\cdots,s_m, F))(L^{-1})\stackrel{\delta^{-p}}\rightarrow$$
$$\cdots\stackrel{\delta^{-1}}\rightarrow h^{-1}(L^{\bullet}(s_1,\cdots,s_m, F))\rightarrow
h^{-1}(L^{\bullet}(s_1,\cdots,s_{m+1}, F))\rightarrow
h^{0}(L^{\bullet}(s_1,\cdots,s_m, F))(L^{-1})\stackrel{\delta^0}{\rightarrow}$$
$$ h^{0}(L^{\bullet}(s_1,\cdots,s_m, F))\rightarrow h^{0}(L^{\bullet}(s_1,\cdots,s_{m+1}, F))\rightarrow0.$$
Note that the connecting homomorphism $\delta^{-p}$ is the multiplication by $(-1)^p s_{m+1}$. By induction we obtain a Koszul complex $L^{\bullet}(s_1,\cdots,s_{m+1}, F)$ such that
$$\text{dim}(h^{i} (L^{\bullet}(s_1,\cdots,s_{m+1},  F)))=0,\quad i\neq0$$
and
$$h^0 (L^{\bullet}(s_1,\cdots,s_{m+1},  F))= F/(s_1,\cdots,s_{m+1}) F.$$
After finite steps we get $s_1,\cdots,s_r\in \Gamma(X, L)$ ($r\leq n$) such that
$$\dim (F/(s_1,\cdots,s_{r}) F)=0,$$
and by the construction,
$$\text{dim}(h^{i} (L^{\bullet}(s_1,\cdots,s_{r},  F)))=0 \text{ for all }i.$$
If $r=n$, then we finish the proof.
If $r<n$, we can choose general sections $s_{r+1},\cdots,s_n$ and finish the proof.
\end{proof}

\section{Regularity and Generation of Jets}
Throughout this section, $k$ is an algebraically closed field in arbitrary characteristic unless otherwise specified. All varieties are defined over $k$. First we recall the following
\begin{defn}
Let $X$ be a proper variety. A coherent sheaf $F$ on $X$ simultaneously generates  $r$-jets if for any $r+1$ closed points $x_1,\cdots,x_{r+1}\in X$ (not necessarily different from each other), the canonical morphism
$$H^0(X,F)\rightarrow H^0(X,F\otimes\sO_X/m_{X,x_1}m_{X,x_2}\cdots m_{X,x_{r+1}})$$
is surjective.
\end{defn}
When $F$ is a line bundle, this notion is equivalent to the $k$-jet ampleness in \cite{Beltrametti1993}. Simultaneously generating $r$-jets implies the separation of jets along any 0-dimensional subschemes of length $r+1$, as the following proposition shows.
\begin{prop}\label{prop_sepjet_imply_sep0cycles}
If $F$ simultaneously generates  $r$-jets, then for any 0-dimensional subscheme $Z$ of length $r+1$ (i.e. $\dim_k H^0(Z,\sO_Z)=r+1$), the natural morphism
$$H^0(X,F)\rightarrow H^0(X,F\otimes\sO_Z)$$
is surjective.
\end{prop}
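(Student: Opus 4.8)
The plan is to reduce separation along an arbitrary length-$(r+1)$ subscheme to the hypothesis, which is phrased only in terms of quotients by \emph{products} of maximal ideals. The key point is that the structure sheaf $\sO_Z$ of any such $Z$ is a quotient of $\sO_X/(m_{x_1}\cdots m_{x_{r+1}})$ for a suitable choice of $r+1$ closed points $x_1,\dots,x_{r+1}$. Once this is established, the statement follows by a right-exactness-and-cohomology argument.

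First I would decompose $Z$ over its support $\{y_1,\dots,y_s\}$ and set $l_i:=\dim_k\sO_{Z,y_i}$, so that $\sum_{i=1}^s l_i=\dim_k H^0(Z,\sO_Z)=r+1$. Since $\sO_{Z,y_i}$ is an Artinian local ring of length $l_i$ with residue field $k$, the descending chain of powers of its maximal ideal together with Nakayama's lemma forces $\bar m_{y_i}^{\,l_i}=0$; equivalently $m_{y_i}^{l_i}\subseteq \mathcal I_{Z,y_i}$ as ideals in $\sO_{X,y_i}$, where $\mathcal I_Z$ denotes the ideal sheaf of $Z$. Next, take the multiset of points in which each $y_i$ appears with multiplicity $l_i$, and label them $x_1,\dots,x_{r+1}$; then $m_{x_1}\cdots m_{x_{r+1}}=\prod_{i=1}^s m_{y_i}^{l_i}$, and I claim this product ideal sheaf is contained in $\mathcal I_Z$. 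This is checked stalkwise: at a support point $y_j$ one has $(m_{y_i})_{y_j}=\sO_{X,y_j}$ for $i\neq j$ because $y_i\neq y_j$, so the stalk of the product is $m_{y_j}^{l_j}\sO_{X,y_j}\subseteq\mathcal I_{Z,y_j}$ by the previous step, while away from the support $\mathcal I_Z$ is the unit ideal. Consequently there is a surjection of $\sO_X$-modules $\sO_X/(m_{x_1}\cdots m_{x_{r+1}})\twoheadrightarrow\sO_Z$.

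Finally I would tensor this surjection with $F$. As tensoring is right exact, we obtain a surjection $F\otimes\sO_X/(m_{x_1}\cdots m_{x_{r+1}})\twoheadrightarrow F\otimes\sO_Z$ of coherent sheaves, both supported on the finite set $\{y_1,\dots,y_s\}$. Global sections is exact on sheaves with $0$-dimensional support (their higher cohomology vanishes), so applying $H^0$ preserves this surjection. Composing with the surjection $H^0(X,F)\twoheadrightarrow H^0(X,F\otimes\sO_X/(m_{x_1}\cdots m_{x_{r+1}}))$ supplied by the hypothesis that $F$ simultaneously generates $r$-jets, and observing that the composite is exactly the natural map $H^0(X,F)\to H^0(X,F\otimes\sO_Z)$, gives the desired surjectivity.

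I expect the only substantive step to be the containment $\prod_i m_{y_i}^{l_i}\subseteq\mathcal I_Z$: it is precisely where the \emph{product} (as opposed to the intersection) of maximal ideals is used, and it rests on the disjointness of the supports of the distinct $m_{y_i}$ together with the Artinian-length bound $\bar m^{\,l}=0$. The remaining manipulations—right exactness of $\otimes F$ and the vanishing of higher cohomology on a finite set—are formal.
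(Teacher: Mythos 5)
Your proposal is correct and takes essentially the same route as the paper's own proof: decompose $\sO_Z$ over its finite support, use Nakayama's lemma to get $m_{y_i}^{l_i}\subseteq\mathcal I_{Z,y_i}$, realize $\sO_Z$ as a quotient of $\sO_X/(m_{x_1}\cdots m_{x_{r+1}})$ for the multiset of support points, and conclude by right-exactness and the jet-generation hypothesis. Your write-up is merely a bit more explicit than the paper's (the stalkwise verification of the product containment and the exactness of $H^0$ on sheaves with $0$-dimensional support are left implicit there).
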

\begin{proof}
Suppose that $$\sO_Z=\bigoplus_{i=1}^{m}\sO_{X}/I_i$$
where $\{x_i\}_{i=1,\cdots,m}$ are closed points of $X$ and for each $1\leq i\leq m$, $\sqrt{I_i}=m_{x_i}$. Denote $r_i=\dim_k \sO_{X}/I_i$ and $\sum_{i=1}^m r_i=r+1$, then by Nakayama's lemma we have $(m_{x_i}/I_i)^{r_i}=0$, which implies $m_{x_i}^{r_i}\subseteq I_i$. Therefore there is a natural surjective morphism
$$H^0(X,F\otimes\sO_X/m^{r_1}_{x_1}\cdots m^{r_m}_{x_{m}})\rightarrow H^0(X,F\otimes\sO_Z)$$
Hence we get the surjective morphism
$$H^0(X,F)\rightarrow H^0(X,F\otimes\sO_Z).$$
\end{proof}

\begin{thm}\label{thm_regular}
Let $X$ be a projective variety and $L$ be an ample and globally generated line bundle on $X$. If $F$ is a $0$-regular coherent sheaf with respect to $L$, then $F\otimes L\otimes m_{x}$ is $0$-regular with respect to $L$ for every closed point $x\in X$.
\end{thm}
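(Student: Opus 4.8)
The plan is to unwind the definition and reduce everything to vanishing statements that follow from the $0$-regularity of $F$. Write $M_i := F\otimes L^{\otimes(1-i)}$, so that, since $(F\otimes L\otimes m_x)\otimes L^{\otimes(-i)}=m_x\otimes M_i$, the assertion to prove—namely $0$-regularity of $F\otimes L\otimes m_x$—reads $H^i(X,\,m_x\otimes M_i)=0$ for every $i>0$. I would first record the two inputs I will use, both consequences of $0$-regularity: that $F$ is globally generated (Theorem \ref{thm_Mumford}), and that $F$ is $1$-regular, i.e. $H^i(X,\,F\otimes L^{\otimes(1-i)})=H^i(X,M_i)=0$ for all $i>0$, which is the standard monotonicity of Castelnuovo--Mumford regularity (\cite{Mumford1966}, see also \cite{Lazarsfeld2004}).

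The core computation starts from the structure sequence $0\to m_x\to\sO_X\to k(x)\to 0$ tensored with $M_i$. Because $F$ need not be locally free, this produces only a four-term exact sequence $0\to T_i\to m_x\otimes M_i\to M_i\to S_i\to 0$, where $S_i=k(x)\otimes M_i$ and $T_i=\mathcal{T}or_1^{\sO_X}(k(x),M_i)$ (here $\mathcal{T}or_1^{\sO_X}(\sO_X,M_i)=0$ since $\sO_X$ is flat). The key point is that both $S_i$ and $T_i$ are supported at the single point $x$, hence are finite length, so $H^j(X,S_i)=H^j(X,T_i)=0$ for all $j>0$. I would split the four-term sequence into $0\to T_i\to m_x\otimes M_i\to K_i\to 0$ and $0\to K_i\to M_i\to S_i\to 0$, where $K_i=\ker(M_i\to S_i)$.

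Chasing the two long exact cohomology sequences then gives, using $H^{\geq 1}(T_i)=H^{\geq 1}(S_i)=0$, the isomorphisms $H^i(m_x\otimes M_i)\cong H^i(K_i)$ for $i\geq 1$ and $H^i(K_i)\cong H^i(M_i)$ for $i\geq 2$. Thus for $i\geq 2$ the target is $H^i(M_i)=H^i(X,\,F\otimes L^{\otimes(1-i)})$, which vanishes by $1$-regularity. The remaining case $i=1$ is where global generation enters: since $M_1=F$, the sequence $0\to K_1\to F\to S_1\to 0$ yields an exact sequence $H^0(X,F)\xrightarrow{\ \alpha\ } H^0(X,S_1)\to H^1(K_1)\to H^1(X,F)\to 0$, and $H^1(X,F)=0$ by $1$-regularity. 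As $H^0(X,S_1)$ is the fiber $F\otimes k(x)$ and $\alpha$ is the evaluation map, global generation of $F$ makes $\alpha$ surjective, whence $H^1(K_1)=0$ and therefore $H^1(m_x\otimes M_1)=0$. This exhausts all $i>0$.

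The main obstacle—really the only place care is needed—is the failure of left-exactness when tensoring the ideal sequence by the possibly non-locally-free sheaf $F$: one must carry the Tor sheaf $T_i$ through the argument and verify that its point-support kills every contribution to higher cohomology, so that only the two genuinely geometric inputs survive. Once that bookkeeping is in place, the proof amounts to matching the correct regularity input to each cohomological degree (the higher vanishing of $1$-regularity for $i\geq 2$, and global generation for $i=1$).
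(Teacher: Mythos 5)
Your proof is correct, but it takes a genuinely different route from the paper's. The paper proves this theorem with its Koszul-complex machinery: by Lemma \ref{Koszul complex} it chooses sections $s_1,\dots,s_n\in H^0(X,L)$ vanishing at $x$ whose common zero locus $Z$ is $0$-dimensional and whose Koszul complex on $F$ has $0$-dimensional cohomology sheaves, chases $0$-regularity through the twisted Koszul complex to conclude that $I_Z\cdot F\otimes L$ is $0$-regular, and then transfers regularity to $m_x\cdot F\otimes L$ and finally to $m_x\otimes F\otimes L$, using at each step that the relevant kernels and quotients are point-supported and hence cohomologically invisible. In particular, the paper's argument uses nothing beyond the definition of regularity and that geometric lemma; it does not invoke Mumford's theorem at this stage. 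You instead tensor the ideal sequence $0\to m_x\to\sO_X\to k(x)\to 0$ of the single point with $M_i=F\otimes L^{\otimes(1-i)}$, carry the $\mathcal{T}or_1$ correction term (correctly noting it is supported at $x$, so it contributes nothing to higher cohomology), and reduce everything to two standard consequences of $0$-regularity: monotonicity ($0$-regular implies $1$-regular), which disposes of all $i\geq 2$ and kills $H^1(X,F)$, and global generation, which gives surjectivity of the evaluation map in the remaining case $i=1$. One caveat: the monotonicity input is strictly more than what the paper's Theorem \ref{thm_Mumford} records (only global generation), so it cannot be cited as that theorem; it is, however, a standard part of Mumford's regularity theorem for an ample, globally generated $L$ (\cite{Mumford1966}; \cite{Lazarsfeld2004}, Theorem 1.8.5), so your citation is legitimate. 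What your route buys is brevity and a clean isolation of exactly which consequence of regularity is used in which cohomological degree, with no need to choose sections avoiding associated points; what the paper's route buys is self-containedness --- its only input is its own Lemma \ref{Koszul complex}, so it in effect reproves the piece of Mumford's theorem that you cite as a black box.
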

\begin{proof}
By Lemma \ref{Koszul complex} there is a sequence of global sections $s_1,\cdots,s_n \in \Gamma(X, \mathscr{L})$ whose common zero locus is a 0-dimensional closed subscheme $Z$ containing $x$, such that the cohomology sheaves of the Koszul complex
$\mathscr{L}^{\bullet}(s_1,\cdots,s_n,\mathscr{F})$ are 0-dimensional, i.e.
$$\text{dim}(h^i (\mathscr{L}^{\bullet}(s_1,\cdots,s_n,\mathscr{F})))=0\text{ for all }i.$$
Denote $I_Z$ be the ideal sheaf of $Z$.

Tensoring $L$ with the Kuszul complex $\mathscr{L}^{\bullet}(s_1,\cdots,s_n,\mathscr{F})$, we get
$$0\rightarrow F\otimes\bigwedge^n(L^{-1})^{\oplus n}\otimes L\rightarrow F\otimes\bigwedge^{n-1}(L^{-1})^{\oplus n}\otimes L\rightarrow\cdots
\rightarrow F\otimes (L^{-1})^{\oplus n}\otimes L\stackrel{d^{-1}}\rightarrow I_Z\cdot\mathscr{F}\otimes L$$
where $d^{-1}$ is surjective and all the cohomology sheaves are 0-dimensional.
Since $\mathscr{F}$ is $0$-regular, $F\otimes\bigwedge^i(L^{-1})^{\oplus n}\otimes L$ is $i-1$-regular for every $i\geq 1$. Therefore $I_Z\cdot\mathscr{F}\otimes L$ is $0$-regular by an argument on chasing through the above long exact sequence.

Note that $(m_x\cdot\mathcal {F}/I_Z\cdot\mathcal {F})\otimes\mathscr{L}$ is a subsheaf of $h^0(\mathscr{L}^{\bullet}(s_1,\cdots,s_n,F)\otimes L$ which is of dimension zero. Therefore $(m_x\cdot\mathcal {F}/I_Z\cdot\mathcal {F})\otimes\mathscr{L}$ is 0-dimensional and hence is $0$-regular. As a consequence, $m_x\cdot F\otimes L$ is $0$-regular by chasing the following exact sequence.
$$0\rightarrow I_Z\cdot F\otimes L\rightarrow m_x\cdot F\otimes L\rightarrow(m_x\cdot\mathcal {F}/I_Z\cdot\mathcal {F})\otimes L\rightarrow0$$

Since the kernel of $m_x\otimes F\otimes L \twoheadrightarrow m_x\cdot F\otimes L$ is 0-dimensional, we see that $m_x\otimes F\otimes L$ is $0$-regular.
\end{proof}
\begin{lem}\label{lem_separate_jets}
Let $X$ be a projective variety and $L$ be an ample and globally generated line bundle on $X$. Assume that $r$ is a non-negative integer and $F$ is a coherent sheaf on $X$. If for every closed point $(x_1,\cdots,x_l)\in X^l$ with $0\leq l\leq r$, $m_{x_1}\cdots m_{x_l}\cdot F\otimes L^{\otimes l}$ is generated by global sections, then $F\otimes L^{\otimes r}$ simultaneously generates  $s$-jets for every $0\leq s\leq r$.
\end{lem}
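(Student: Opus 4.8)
The plan is to fix $s$ with $0\le s\le r$ and $s+1$ closed points $x_1,\dots,x_{s+1}$ (repetitions allowed), set $G:=F\otimes L^{\otimes r}$ and $I:=m_{x_1}\cdots m_{x_{s+1}}$, and prove directly that the restriction $H^0(X,G)\to H^0(X,G/IG)$ is surjective. Since $G/IG=G\otimes\sO_X/I$ and the points are arbitrary, this is exactly the assertion that $G$ simultaneously generates $s$-jets, and $s\le r$ is arbitrary. The organizing device will be the decreasing filtration by the ideals $I_j:=m_{x_1}\cdots m_{x_j}$ (so $I_0=\sO_X$, $I_{s+1}=I$), giving $G=I_0G\supseteq I_1G\supseteq\cdots\supseteq I_{s+1}G=IG$.

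First I would record two facts about this filtration. Because $I_{j+1}=I_j\cdot m_{x_{j+1}}$ one has $I_{j+1}G=m_{x_{j+1}}(I_jG)$, so each successive quotient $I_jG/I_{j+1}G=(I_jG)\otimes k(x_{j+1})$ is the fiber of $I_jG$ at $x_{j+1}$, a skyscraper sheaf. Second—this is where the hypothesis enters—for $0\le j\le s$ the sheaf $I_jG=(m_{x_1}\cdots m_{x_j}\cdot F\otimes L^{\otimes j})\otimes L^{\otimes(r-j)}$ is globally generated: the first factor is globally generated by hypothesis (legitimately, since $j\le s\le r$), $L^{\otimes(r-j)}$ is globally generated because $L$ is, and a tensor product of globally generated sheaves is globally generated. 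Combining the two facts, the evaluation $H^0(X,I_jG)\to (I_jG)\otimes k(x_{j+1})=H^0(X,I_jG/I_{j+1}G)$ is surjective for every $0\le j\le s$.

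The heart of the argument is a telescoping lift up the filtration. Given $\bar t\in H^0(X,G/IG)$, I would build $\tau_j\in H^0(X,G)$ for $j=0,\dots,s+1$ with $\bar t-\overline{\tau_j}\in H^0(X,I_jG/IG)$, where $\overline{(-)}$ denotes reduction modulo $IG$. Take $\tau_0=0$. For the step $j\to j+1$ (with $0\le j\le s$), project the defect $w_j:=\bar t-\overline{\tau_j}$ along $H^0(X,I_jG/IG)\to H^0(X,I_jG/I_{j+1}G)$, lift its image to $\sigma\in H^0(X,I_jG)$ using the surjectivity above, let $\sigma'\in H^0(X,G)$ be the image of $\sigma$ under $I_jG\hookrightarrow G$, and put $\tau_{j+1}=\tau_j+\sigma'$. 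Writing $\bar\sigma\in H^0(X,I_jG/IG)$ for the reduction of $\sigma$ (which equals that of $\sigma'$), the new defect is $w_j-\bar\sigma$, which maps to $0$ in $H^0(X,I_jG/I_{j+1}G)$; by left-exactness of $H^0$ applied to $0\to I_{j+1}G/IG\to I_jG/IG\to I_jG/I_{j+1}G\to 0$ it therefore lies in $H^0(X,I_{j+1}G/IG)$, closing the induction. At $j=s+1$ this space is $0$, so $\bar t=\overline{\tau_{s+1}}$ lifts to $H^0(X,G)$.

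The individual steps are routine; the points to get right are all bookkeeping. One must verify $I_{j+1}G=m_{x_{j+1}}(I_jG)$ so that the graded pieces genuinely are fibers, and check that reducing a section of $I_jG$ modulo $IG$ agrees with the element $\bar\sigma$ produced inside $H^0(X,I_jG/IG)$, so that adding $\sigma'$ cancels exactly the projected part of the defect. I expect the only real constraint to be the inequality $s\le r$: it is precisely what guarantees $r-j\ge 0$ at every stage, hence that the global-generation hypothesis for $m_{x_1}\cdots m_{x_j}\cdot F\otimes L^{\otimes j}$ can be invoked after twisting by the remaining $L^{\otimes(r-j)}$.
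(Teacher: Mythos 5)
Your proof is correct, and it is essentially the paper's own argument: the paper proves the lemma by induction on $s$ plus a snake-lemma diagram chase, and your telescoping defect-correction along the filtration $m_{x_1}\cdots m_{x_j}\cdot F\otimes L^{\otimes r}$ is exactly that induction unrolled, resting on the same key surjectivity (evaluation at the next point of the globally generated sheaf $m_{x_1}\cdots m_{x_j}\cdot F\otimes L^{\otimes j}$ twisted by the remaining $L^{\otimes(r-j)}$). No gaps; the bookkeeping points you flag (graded pieces being fibers, compatibility of the reductions) all check out.
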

\begin{proof}
We prove the lemma by induction on $s$.

The case when $s=0$ is trivial. Let $x_0,\cdots,x_s$ be $s+1$ closed points of $X$. Consider the following diagram
$$\xymatrix{
0 \ar[d] & 0\ar[d]\\
H^0(X,m_{x_0}\cdots m_{x_{s-1}}\cdot F\otimes L^{\otimes r}) \ar[r]^-{\varphi}\ar[d] & H^0(X,(m_{x_0}\cdots m_{x_{s-1}}\cdot F/m_{x_0}\cdots m_{x_{s}}\cdot F)\otimes L^{\otimes r}) \ar[d]\\
H^0(X,F\otimes L^{\otimes r}) \ar[r]^-{\phi_s}\ar[d]^{\phi_{s-1}} & H^0(X,F\otimes L^{\otimes r}\otimes \sO_X/m_{x_0}\cdots m_{x_{s}}) \ar[d]^{\psi}\\
H^0(X,F\otimes L^{\otimes r}\otimes \sO_X/m_{x_0}\cdots m_{x_{s-1}})\ar[r]^-{Id} & H^0(X,F\otimes L^{\otimes r}\otimes \sO_X/m_{x_0}\cdots m_{x_{s-1}})\\
}$$
By induction, $\phi_{s-1}$ is surjective  and then $\psi$ is  surjective. Therefore the two collum complexes are short exact sequences. Moreover the horizontal morphism $\varphi$ is equivalent to the canonical morphism
$$H^0(X,m_{x_0}\cdots m_{x_{s-1}}\cdot F\otimes L^{\otimes r})\rightarrow H^0(X,m_{x_0}\cdots m_{x_{s-1}}\cdot F\otimes L^{\otimes r}\otimes k(x_s)),$$
which is surjective since $m_{x_0}\cdots m_{x_{s-1}}\cdot F\otimes L^{\otimes s}$ and $L$ are generated by global sections. Then the surjectivity of $\phi_s$ follows from the snake lemma.
\end{proof}
\begin{thm}\label{thm_regular_sepajets}
Let $X$ be a projective variety and $L$ be an ample and globally generated line bundle on $X$. If $F$ is a $0$-regular coherent sheaf with respect to $L$, then $F\otimes L^{\otimes r}$ simultaneously generates  $r$-jets.
\end{thm}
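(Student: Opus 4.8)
The plan is to verify the hypothesis of Lemma \ref{lem_separate_jets} and then invoke that lemma directly. Concretely, for arbitrary closed points $x_1,\ldots,x_l\in X$ with $0\leq l\leq r$, I will show that the sheaf $G_l:=m_{x_1}\cdots m_{x_l}\cdot F\otimes L^{\otimes l}$ is generated by its global sections. Once this is established for all such $l$, Lemma \ref{lem_separate_jets} immediately yields that $F\otimes L^{\otimes r}$ simultaneously generates $s$-jets for every $0\leq s\leq r$, and in particular $r$-jets, which is the assertion of the theorem.

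To prove the global generation of $G_l$, I will prove the stronger statement that $G_l$ is $0$-regular with respect to $L$, and then apply Mumford's theorem (Theorem \ref{thm_Mumford}). The $0$-regularity is established by induction on $l$. The base case $l=0$ is exactly the hypothesis that $F=G_0$ is $0$-regular. For the inductive step, assume that $G_{l-1}=m_{x_1}\cdots m_{x_{l-1}}\cdot F\otimes L^{\otimes(l-1)}$ is $0$-regular. Applying Theorem \ref{thm_regular} to the $0$-regular coherent sheaf $G_{l-1}$ and the closed point $x_l$, I conclude that $m_{x_l}\cdot G_{l-1}\otimes L$ is $0$-regular (this is the product-ideal form established in the proof of that theorem). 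Since $L$ is invertible, tensoring by $L$ is exact and commutes with multiplication by an ideal sheaf, so
$$m_{x_l}\cdot G_{l-1}\otimes L=m_{x_l}\cdot\big(m_{x_1}\cdots m_{x_{l-1}}\cdot F\otimes L^{\otimes(l-1)}\big)\otimes L=m_{x_1}\cdots m_{x_l}\cdot F\otimes L^{\otimes l}=G_l.$$
This closes the induction and shows that $G_l$ is $0$-regular, hence globally generated, for every $0\leq l\leq r$ and every choice of points.

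The substantive content of the argument has already been absorbed into the preliminary results: Theorem \ref{thm_regular} propagates $0$-regularity under multiplication by a single maximal ideal (after a single twist by $L$), while Lemma \ref{lem_separate_jets} converts global generation of these twisted product-ideal sheaves into separation of jets via a snake-lemma diagram chase. The remaining step, namely the induction above, is therefore essentially formal. The only point demanding care is the identification $m_{x_l}\cdot G_{l-1}\otimes L=G_l$, which relies on multiplication by an ideal sheaf commuting with tensoring by the invertible sheaf $L$; this ensures the product ideal $m_{x_1}\cdots m_{x_l}$ is built up one factor at a time while the powers of $L$ accumulate in lockstep, so each application of Theorem \ref{thm_regular} advances the index $l$ by exactly one. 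I expect no genuine obstacle at this stage, as the cohomological heart of the matter lives entirely inside Theorem \ref{thm_regular}.
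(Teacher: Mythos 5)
Your proposal is correct and follows essentially the same route as the paper: both arguments reduce to showing that $m_{x_1}\cdots m_{x_l}\cdot F\otimes L^{\otimes l}$ is $0$-regular, hence globally generated by Theorem \ref{thm_Mumford}, and then conclude via Lemma \ref{lem_separate_jets}. The only cosmetic difference is bookkeeping: the paper iterates the stated (tensor) form of Theorem \ref{thm_regular} to get $0$-regularity of $F\otimes(L\otimes m_{x_1})\otimes\cdots\otimes(L\otimes m_{x_l})$ and passes to the ideal-product sheaf once at the end through a surjection with $0$-dimensional kernel, whereas you pass to the ideal product at every inductive step by citing the product-ideal statement established inside the proof of Theorem \ref{thm_regular}; both versions are valid.
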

\begin{proof}
By Theorem \ref{thm_regular}, for every $l$ closed points $x_1,\cdots,x_l\in X$ with $l\leq r$, $$F\otimes(L\otimes m_{x_1})\otimes\cdots\otimes(L\otimes m_{x_l})$$ is $0$-regular. Since the kernel of the natural surjective morphism
$$F\otimes(L\otimes m_{x_1})\otimes\cdots\otimes(L\otimes m_{x_l})\twoheadrightarrow m_{x_1}\cdots m_{x_l}\cdot F\otimes L^{\otimes l}$$
is 0-dimensional, $m_{x_1}\cdots m_{x_l}\cdot F\otimes L^{\otimes l}$ is 0-regular. By Theorem \ref{thm_Mumford}, it is generated by global sections. Then the Theorem follows from Lemma \ref{lem_separate_jets}.
\end{proof}
\begin{rmk}\label{rmk_listof0regular}
0-regularity of coherent sheaves always comes from various vanishing theorems. We make a non-complete list of 0-regular coherent sheaves as follows:
\begin{enumerate}
  \item $F=\omega_X\otimes\sO_X(D)$ where $X$ is an $n$-dimensional smooth projective variety over $k$ with $\textrm{char}(k)=0$ and $D$ is a normal crossing divisor on $X$. Let $A$ be an ample line bundle. As a consequence of Norimatsu's vanishing theorem (\cite{Norimatsu1978}),  $K_X\otimes\sO_X(D)\otimes L^{\otimes n}\otimes A$ is 0-regular with respect to any globally generated ample line bundle $L$. This proves Corollary \ref{thm_Fujita} in the case of $\textrm{char}(k)=0$.
  \item $F=R^jf_\ast \omega_Y$, $j\geq 0$ where $f:Y\rightarrow X$ is a proper morphism over $k$ with $\textrm{char}(k)=0$ such that $Y$ is smooth. Then $R^jf_\ast \omega_Y\otimes L^{\otimes \dim X+1}$ is 0-regular with respect to any globally generated ample line bundle $L$ on $X$. This is a consequence of the Koll\'{a}r's vanishing theorem (\cite{Kollar1986}).
      $$H^i(X,R^jf_\ast \omega_Y\otimes H)=0,\text{ for all } i>0,j\geq 0$$
      where $H$ is an ample line bundle.
      In particular, when $f$ is birational, $f_\ast\omega_Y$ coincides with $F^n\widetilde{\Omega}_X^\bullet$ where $\widetilde{\Omega}_X^\bullet$ is the filtered de Rham complex of $X$ defined by taking a cubical hyper-resolution of $X$ (\cite{Peters-Steenbrink2008}). Moreover, Popa and Schnell (\cite{Popa-Schnell2014}) generalize Koll\'{a}r's vanishing theorem to the pushforward of pluri-canonical sheaf, namely
      $$H^i(X,f_\ast\omega_Y^{\otimes l}\otimes H^{\otimes m})=0,\text{ for all } m\geq l(n+1)-n, i>0$$
      where $H$ is an ample and globally generated line bundle. This implies that
      $$f_\ast\omega_Y^{\otimes l}\otimes L^{\otimes l(n+1)}$$
      is 0-regular with respect to $L$. These vanishing theorems lead to Corollary \ref{thm_relative_Fujita}.
  \item $F=\sO_X(K_X+\Delta)$ where $(X,\Delta)$ is a projective klt pair defined  over $k$ with $\textrm{char}(k)=0$ such that $K_X+\Delta$ is numerically equivalent to a Cartier divisor. By the Kawamata-Viehweg vanishing theorem (\cite{Kollar-Mori1998}), we have
      $$H^i(X,\sO_X(K_X+\Delta)\otimes L)=0,\text{ for all } i>0$$
      This leads to Corollary \ref{cor_klt_Fujita}.
\end{enumerate}
\end{rmk}
\subsection{Generation of Jets in Positive Characteristic}
Now we consider generations of jets in positive characteristics. Although the Kodaira-type vanishing fails generally over a positive characteristic field, there is another kind of coherent sheaves ensuring the results as in Theorem \ref{thm_regular_sepajets}.
\begin{thm}\label{thm_Frobenius_sepajets}
Let $X$ be an $n$-dimensional projective variety over $k$ with $\textrm{char}(k)=p>0$ and $L$ be an ample and globally generated line bundle on $X$. Let $A$ be an ample line bundle on $X$. Denote by $F_X:X\rightarrow X$ the absolute Frobenius morphism of $X$. Let $G$ be a coherent sheaf such that there exists a positive integer $N$ and a surjective morphism $\textrm{tr}:F_{X\ast}^{N}G\rightarrow G$. Then $G\otimes L^{\otimes (n+r)}\otimes A$ simultaneously generates $r$-jets.
\end{thm}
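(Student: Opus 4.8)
The plan is to reduce the statement to the characteristic-free result Theorem~\ref{thm_regular_sepajets}. Set $F:=G\otimes L^{\otimes n}\otimes A$; since $F\otimes L^{\otimes r}=G\otimes L^{\otimes(n+r)}\otimes A$, it suffices to prove that $F$ is $0$-regular with respect to $L$, i.e. that $H^i(X,G\otimes L^{\otimes(n-i)}\otimes A)=0$ for all $i>0$. For $i>n$ this is Grothendieck vanishing, and for $1\le i\le n$ the twist $H:=L^{\otimes(n-i)}\otimes A$ is ample (the nef bundle $L^{\otimes(n-i)}$ tensored with the ample $A$). Thus everything comes down to the following Frobenius vanishing: if $G$ carries a surjection $\textrm{tr}:F_{X\ast}^{N}G\to G$ and $H$ is ample, then $H^i(X,G\otimes H)=0$ for $i>0$.

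For this vanishing I would assemble the standard Frobenius toolkit. Because $F_X$ is a finite (hence affine) morphism, $F_{X\ast}^{e}$ is exact and $H^i(X,F_{X\ast}^{e}\mathcal E)=H^i(X,\mathcal E)$ for every coherent $\mathcal E$ and every $i$. Since $F_X^\ast H\cong H^{\otimes p}$, the projection formula gives $F_{X\ast}^{Ne}(G)\otimes H\cong F_{X\ast}^{Ne}(G\otimes H^{\otimes p^{Ne}})$. Iterating $\textrm{tr}$ produces surjections $\theta_e:F_{X\ast}^{Ne}(G\otimes H^{\otimes p^{Ne}})\twoheadrightarrow G\otimes H$ for all $e\ge1$, whose source has cohomology $H^i(X,G\otimes H^{\otimes p^{Ne}})$. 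By Serre vanishing the latter is $0$ for $e\gg0$ and $i>0$. The remaining task is to transport this vanishing through $\theta_e$ down to $H^i(X,G\otimes H)$.

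This last step is where the main difficulty lies, and it is not formal: a surjection of sheaves need not be surjective on $H^i$, the obstruction being $H^{i+1}$ of $\ker\theta_e$. In top degree $i=n$ there is no obstruction since $H^{n+1}=0$, so $H^n(X,G\otimes H)=0$ follows at once; the genuine work is in the middle degrees $0<i<n$. Here I would exploit the real Frobenius input hidden in the hypothesis: applying Grothendieck duality to the finite morphism $F_X$ (using $F_X^{!}\omega^\bullet_{X/k}\cong\omega^\bullet_{X/k}$) converts the surjective trace $\textrm{tr}$ into an injection of the dualizing dual $R\mathcal{H}om(G,\omega^\bullet_{X/k})$ into its Frobenius pushforward, so that by Serre duality the desired vanishing becomes the injectivity of an iterated Frobenius map on the relevant cohomology --- precisely the injectivity encoded by $F$-injectivity. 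Combining this injectivity with Serre vanishing for the very negative twists $H^{\otimes -p^{Ne}}$ then forces $H^i(X,G\otimes H)=0$. Making this duality-plus-injectivity step precise, rather than the bookkeeping around it, is the crux of the proof.
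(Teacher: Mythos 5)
Your reduction fails at its central claim. You propose to prove that $F=G\otimes L^{\otimes n}\otimes A$ is itself $0$-regular with respect to $L$, and you isolate as the key step the lemma: if $G$ admits a surjection $\textrm{tr}:F_{X\ast}^{N}G\to G$ and $H$ is ample, then $H^i(X,G\otimes H)=0$ for $i>0$. This lemma is false. On any smooth projective variety over $k$ with $\textrm{char}(k)=p>0$, the Cartier operator $C:F_{X\ast}\omega_X\to\omega_X$ is surjective (this is exactly the surjection the paper itself constructs in the proof of Theorem \ref{thm_log_Fujita}), so $G=\omega_X$ satisfies your hypothesis with $N=1$; but by Raynaud's counterexample to Kodaira vanishing in positive characteristic there is a smooth projective surface $X$ and an ample line bundle $H$ with $H^1(X,\omega_X\otimes H)\neq 0$. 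Consequently no argument can establish your lemma, and in particular the duality step you defer as "the crux" cannot be made precise: what surjectivity of the trace dualizes to is injectivity of Frobenius on local cohomology (i.e.\ $F$-injectivity), and $F$-injectivity notoriously does not imply Kodaira-type global vanishing --- smooth varieties are $F$-injective, and that is precisely where the counterexamples live. The failure is not a matter of the special shape $H=L^{\otimes(n-i)}\otimes A$ either; there is simply no mechanism in the hypothesis that injects $H^i(X,G\otimes H)$ into the Serre-vanishing range, since a surjection of sheaves controls $H^0$ and $H^n$ but nothing in between.

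The paper's proof is structured precisely to sidestep the vanishing you are after, and the difference is instructive. It never asserts $0$-regularity of $G\otimes L^{\otimes n}\otimes A$; instead it proves $0$-regularity of the pushforwards $F_{X\ast}^{N'}G\otimes L^{\otimes n}\otimes A$ for $N'\gg 0$, which genuinely follows from Serre vanishing because finiteness of Frobenius plus the projection formula convert the fixed twist into a twist by $p^{N'}$-th powers: $H^i(X,F_{X\ast}^{N'}G\otimes L^{\otimes (n-i)}\otimes A)\simeq H^i(X,G\otimes L^{\otimes p^{N'}(n-i)}\otimes A^{\otimes p^{N'}})$. Theorem \ref{thm_regular} and Theorem \ref{thm_Mumford} then give global generation of $m_{x_1}\cdots m_{x_l}\cdot F_{X\ast}^{N'}G\otimes L^{\otimes (n+l)}\otimes A$. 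The trace is used only at the level of sheaves, at the very end: iterating it gives a surjection $F_{X\ast}^{kN}G\to G$ with $kN\geq N_0$, hence a surjection $m_{x_1}\cdots m_{x_l}\cdot F_{X\ast}^{kN}G\otimes L^{\otimes (n+l)}\otimes A\to m_{x_1}\cdots m_{x_l}\cdot G\otimes L^{\otimes (n+l)}\otimes A$, and global generation --- unlike higher cohomology vanishing --- descends along surjections of coherent sheaves. Lemma \ref{lem_separate_jets} then concludes. In short, the Frobenius hypothesis is only ever used to transfer $H^0$-surjectivity statements across the trace map, never $H^i$-vanishing for $i>0$; your proposal tries to transfer the latter, and that is exactly what positive characteristic forbids.
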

\begin{proof}
First we claim that there is a positive integer $N_0$ such that for any integer $N'\geq N_0$, $F_{X\ast}^{N'}G\otimes L^{\otimes n}\otimes A$ is $0$-regular with respect to $L$. In fact, this follows from Serre's vanishing theorem and the following projection formula
$$H^i(X,F_{X\ast}^{N'}G\otimes L^{\otimes (n-i)}\otimes A)\simeq H^i(X,G\otimes L^{\otimes p^{N'}(n-i)}\otimes A^{\otimes p^{N'}}).$$
By Theorem \ref{thm_regular}, for any non-negative integer $r$ and every closed point $(x_1,\cdots,x_l)\in X^l$ with $0\leq l\leq r$, $F_{X\ast}^{N'}G\otimes L^{\otimes (n+l)}\otimes A\otimes m_{x_1}\otimes\cdots \otimes m_{x_l}$ is generated by global sections. Since the kernel of the surjective morphism
$$F_{X\ast}^{N'}G\otimes L^{\otimes (n+l)}\otimes A\otimes m_{x_1}\otimes\cdots \otimes m_{x_l}\rightarrow
m_{x_1}\cdots m_{x_l}\cdot F_{X\ast}^{N'}G\otimes L^{\otimes (n+l)}\otimes A$$
has supports contained in the set of isolated points $\{x_1,\cdots,x_l\}$, we see that $m_{x_1}\cdots m_{x_l}\cdot F_{X\ast}^{N'}G\otimes L^{\otimes (n+l)}\otimes A$ is 0-regular. By Theorem \ref{thm_Mumford}, it is globally generated.

Now consider the composition of
$$F_{X\ast}^{kN}G\stackrel{F_{X\ast}^{(k-1)N}\textrm{tr}}\rightarrow \cdots\rightarrow F_{X\ast}^{2N}G\stackrel{F_{X\ast}^{N}\textrm{tr}}\rightarrow F_{X\ast}^{N}G\stackrel{\textrm{tr}}\rightarrow G$$
where $k$ is an integer such that $kN\geq N_0$, we get a surjective morphism $F_{X\ast}^{kN}G\rightarrow G$. As a consequence, for any non-negative integer $r$ and every closed point $(x_1,\cdots,x_l)\in X^l$ with $0\leq l\leq r$, $m_{x_1}\cdots m_{x_l}\cdot G\otimes L^{\otimes (n+l)}\otimes A$ is generated by global sections. Then the theorem follows from Lemma \ref{lem_separate_jets}.
\end{proof}
\begin{thm}\label{thm_log_Fujita}
Let $X$ be an $n$-dimensional smooth projective variety over $k$ ($\textrm{char}(k)=p\geq0$) and $D$ be a normal crossing divisor on $X$. Let $L$ be an ample and globally generated line bundle and $A$ be an ample line bundle on $X$. Then $\omega_X\otimes\sO_X(D)\otimes L^{\otimes (n+r)}\otimes A$ simultaneously generates  $r$-jets.
\end{thm}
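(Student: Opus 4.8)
The plan is to split the argument according to $\textrm{char}(k)$ and reduce each case to one of the two generation criteria already at our disposal: Theorem \ref{thm_regular_sepajets} in characteristic zero and Theorem \ref{thm_Frobenius_sepajets} in positive characteristic. In both cases the target sheaf is recovered as $G\otimes L^{\otimes(n+r)}\otimes A$ with $G=\omega_X\otimes\sO_X(D)$, so the whole problem is to certify the appropriate hypothesis on $G$.

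In characteristic zero I would exhibit $F:=\omega_X\otimes\sO_X(D)\otimes L^{\otimes n}\otimes A$ as $0$-regular with respect to $L$ and then invoke Theorem \ref{thm_regular_sepajets}, which delivers that $F\otimes L^{\otimes r}=\omega_X\otimes\sO_X(D)\otimes L^{\otimes(n+r)}\otimes A$ simultaneously generates $r$-jets. Verifying $0$-regularity amounts to the vanishing $H^i(X,\omega_X\otimes\sO_X(D)\otimes L^{\otimes(n-i)}\otimes A)=0$ for all $i>0$. For $i>n$ this is automatic by dimension, and for $1\le i\le n$ the twisting bundle $L^{\otimes(n-i)}\otimes A$ is ample, being the tensor product of the nef bundle $L^{\otimes(n-i)}$ (recall $n-i\ge 0$ and $L$ is ample) with the ample bundle $A$; hence the vanishing is precisely Norimatsu's vanishing theorem for the normal crossing pair $(X,D)$. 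This is the content of Remark \ref{rmk_listof0regular}(1) and settles the case $\textrm{char}(k)=0$.

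In characteristic $p>0$ Kodaira-type vanishing is unavailable, so instead I would apply Theorem \ref{thm_Frobenius_sepajets} with $G=\omega_X\otimes\sO_X(D)$; for this it suffices to produce a surjective trace $\textrm{tr}\colon F_{X\ast}^{N}G\to G$ for some $N$. The natural source is the logarithmic Cartier operator of the smooth pair $(X,D)$. Since $k$ is algebraically closed, hence perfect, the absolute Frobenius $F_X$ agrees with the relative Frobenius up to the canonical isomorphism induced by Frobenius on $k$, and the Cartier isomorphism for the logarithmic de Rham complex $\Omega^\bullet_X(\log D)$ identifies $\Omega^n_X(\log D)=\omega_X\otimes\sO_X(D)$ with the top cohomology sheaf $h^n(F_{X\ast}\Omega^\bullet_X(\log D))$. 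As $\Omega^{n+1}_X(\log D)=0$, this cohomology sheaf is a quotient of $F_{X\ast}\Omega^n_X(\log D)=F_{X\ast}G$, so the quotient map followed by the Cartier identification gives a surjection $\textrm{tr}\colon F_{X\ast}G\twoheadrightarrow G$, i.e. the hypothesis of Theorem \ref{thm_Frobenius_sepajets} with $N=1$. That theorem then yields directly that $G\otimes L^{\otimes(n+r)}\otimes A=\omega_X\otimes\sO_X(D)\otimes L^{\otimes(n+r)}\otimes A$ simultaneously generates $r$-jets.

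The two regularity/vanishing bookkeeping computations are routine; the genuinely delicate point is the construction of the surjective trace in positive characteristic. The subtlety is that one must use the \emph{logarithmic} Cartier operator attached to $D$, rather than merely the ordinary Frobenius splitting of the smooth variety $X$, and one must check that its top-degree component surjects onto $\omega_X\otimes\sO_X(D)$ and not just onto $\omega_X$. I expect this step---establishing surjectivity of the log Cartier operator on top forms and matching it cleanly with the absolute Frobenius $F_X$ appearing in Theorem \ref{thm_Frobenius_sepajets}---to be the main obstacle, after which the conclusion is an immediate appeal to the established generation criterion.
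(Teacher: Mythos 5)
Your proposal is correct and follows essentially the same route as the paper: in characteristic zero, Norimatsu vanishing makes $\omega_X\otimes\sO_X(D)\otimes L^{\otimes n}\otimes A$ $0$-regular so that Theorem \ref{thm_regular_sepajets} applies, and in characteristic $p>0$ one feeds $G=\omega_X\otimes\sO_X(D)$ into Theorem \ref{thm_Frobenius_sepajets} via a surjective logarithmic Cartier map $F_{X\ast}(\omega_X\otimes\sO_X(D))\twoheadrightarrow\omega_X\otimes\sO_X(D)$ with $N=1$. The only cosmetic difference is that the paper exhibits this surjection by the explicit local-coordinate formula for the extended Cartier operator, whereas you deduce it from the abstract log Cartier isomorphism and the vanishing of $\Omega^{n+1}_X(\log D)$ --- the same map, packaged differently.
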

\begin{proof}
If $\textrm{char}(k)=0$, this is a consequence of Theorem \ref{thm_regular_sepajets} (see Remark \ref{rmk_listof0regular} (1)). If $\textrm{char}(k)=p>0$, by Theorem \ref{thm_Frobenius_sepajets}
it suffices to construct a surjective morphism $F_\ast(\omega_X\otimes\sO_X(D))\rightarrow\omega_X\otimes\sO_X(D)$. We choose (\'etale) locally on $X$ parameters $x_1,\cdots,x_n$ where $D$ is defined by $x_1\cdots x_r=0$. Then the Cartier morphism
$$C:F_\ast(\omega_X)\rightarrow\omega_X$$
$$x_1^{p-1}\cdots x_n^{p-1}dx_1\wedge\cdots\wedge dx_n\mapsto dx_1\wedge\cdots\wedge dx_n$$
extends to a morphism
$$C':F_\ast(\omega_X\otimes\sO_X(D))\rightarrow\omega_X\otimes\sO_X(D)$$
$$f\frac{dx_1}{x_1}\wedge\cdots\wedge \frac{dx_r}{x_r}\wedge dx_{r+1}\wedge\cdots\wedge dx_n\mapsto f^{[\frac{1}{p}]}\frac{dx_1}{x_1}\wedge\cdots\wedge \frac{dx_r}{x_r}\wedge dx_{r+1}\wedge\cdots\wedge dx_n.$$
Here $f^{[\frac{1}{p}]}$ is the unique function such that $f=(f^{[\frac{1}{p}]})^p+g$, where $g$ does not contains any terms like $(x_1^{a_1}\cdots x_k^{a_k})^p$. It is easy to see that $C'$ is surjective.
\end{proof}
Recall that for a projective variety $X$ over $k$, the non-trivial cohomology sheaves of the dualizing complex $\omega^\bullet_{X/k}$ lie in $[-\dim X,0]$. If $\textrm{char}(k)=p>0$, by applying $R\sH om_{\sO_X}(-,\omega^\bullet_{X/k})$ on the canonical morphism
$$\sO_X\rightarrow F_{X\ast}\sO_X,$$
we gets the trace morphism
$$\textrm{tr}: F_{X\ast} \omega^\bullet_{X/k}\rightarrow \omega^\bullet_{X/k}.$$
$X$ is called $F$-injective if
$$\textrm{tr}^i:F_{X\ast}h^i(\omega^\bullet_{X/k})\rightarrow h^i(\omega^\bullet_{X/k})$$
is surjective for all $i$. When $\textrm{char}(k)=0$, $X$ is called $F$-injective type if there is a model $\sX$ of $X$ over $\spec(R)$ where $R$ is of finite type over $\mathbb{Z}$ such that infinite number of the closed fibers of $\sX$ are $F$-injective.
It is proved by K. Schwede \cite{Schwede2009} that $F$-injective type singularities are always du Bois. Whether the converse is true remains open.
\begin{cor}
Let $X$ be an $n$-dimensional projective variety over $k$ and $L$ be an ample and globally generated line bundle on $X$. Let $\omega^\bullet_{X/k}$ be the dualizing complex of $X$. If one of the followings holds:
\begin{enumerate}
  \item $\textrm{char}(k)>0$ and $X$ is $F$-injective,
  \item $\textrm{char}(k)=0$ and $X$ is $F$-injective type,
\end{enumerate}
then  $h^i(\omega^\bullet_{X/k})\otimes L^{\otimes (n+r+1)}$ simultaneously generates $r$-jets.
\end{cor}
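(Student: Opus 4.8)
The plan is to dispatch the positive-characteristic case directly from Theorem~\ref{thm_Frobenius_sepajets} and to reduce the characteristic-zero case to it by spreading out. When $\textrm{char}(k)=p>0$, set $G=h^i(\omega^\bullet_{X/k})$; by definition $F$-injectivity of $X$ means precisely that the trace $\textrm{tr}^i\colon F_{X\ast}h^i(\omega^\bullet_{X/k})\to h^i(\omega^\bullet_{X/k})$ is surjective, which is exactly the hypothesis of Theorem~\ref{thm_Frobenius_sepajets} with $N=1$. Taking the auxiliary ample bundle to be $A=L$, that theorem yields that $G\otimes L^{\otimes(n+r)}\otimes A=h^i(\omega^\bullet_{X/k})\otimes L^{\otimes(n+r+1)}$ simultaneously generates $r$-jets, as wanted.

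For $\textrm{char}(k)=0$ I would argue by reduction to positive characteristic. Choose a domain $R$ of finite type over $\mathbb{Z}$ with $\textrm{Frac}(R)\hookrightarrow k$, a model $\pi\colon\sX\to\spec R$ of $X$, and a relatively ample and relatively globally generated line bundle $\sL$ restricting to $(X,L)$ after base change to $k$. Shrinking $\spec R$ and using generic flatness together with cohomology and base change, I may assume that $\pi$ is flat with geometrically reduced $n$-dimensional fibres, that $\omega^\bullet_{\sX/R}$ commutes with base change with $R$-flat cohomology sheaves satisfying $h^i(\omega^\bullet_{\sX/R})|_{\sX_s}\cong h^i(\omega^\bullet_{\sX_s/\kappa(s)})$ on every fibre, and that the $H^0$'s entering the jet-generation maps commute with base change. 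By the $F$-injective type hypothesis infinitely many closed fibres are $F$-injective, and since the locus cut out by these openness conditions is dense, at least one such fibre $\sX_s$ survives; passing to $\overline{\kappa(s)}$ (which preserves $F$-injectivity, and preserves jet-generation since cohomology commutes with flat field extension) the positive-characteristic case already proved shows that $h^i(\omega^\bullet_{\sX_s/\kappa(s)})\otimes\sL_s^{\otimes(n+r+1)}$ simultaneously generates $r$-jets.

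It remains to propagate this from a special fibre to the generic fibre, and here lies the main obstacle: one must show that the set of $s\in\spec R$ over whose geometric fibre $h^i(\omega^\bullet_{\sX_s})\otimes\sL_s^{\otimes(n+r+1)}$ simultaneously generates $r$-jets is open. Granting this, that set is a non-empty open subset of the irreducible scheme $\spec R$, hence contains the generic point $\eta$ (of characteristic $0$), and base changing the generic fibre up to $k$ finishes the proof. The subtlety in the openness is that the length of $\sO_X/m_{x_1}\cdots m_{x_{r+1}}$ jumps as the points collide, so the universal length-$(r+1)$ jet subscheme over the relative product $\sX^{r+1}_R$ is not flat and base change for its pushforward can fail. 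I would circumvent this by reducing, via Lemma~\ref{lem_separate_jets}, to the openness of \emph{global generation} of the sheaves $m_{x_1}\cdots m_{x_l}\cdot h^i(\omega^\bullet_{\sX_s})\otimes\sL_s^{\otimes(n+1+l)}$ for $0\le l\le r$, controlling global generation in the flat family by upper semicontinuity of fibrewise $h^0$ and Nakayama on each stratum where the collision pattern of the $x_j$ is fixed, and then checking that these strata assemble into a genuine open condition on the base.
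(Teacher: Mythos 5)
Your case (1) is correct and is exactly the paper's proof: $F$-injectivity is precisely the hypothesis of Theorem~\ref{thm_Frobenius_sepajets} with $N=1$ and $G=h^i(\omega^\bullet_{X/k})$, and taking $A=L$ gives the stated twist $L^{\otimes(n+r+1)}$. For case (2) the paper says only that it ``follows from the standard argument of reduction to mod $p$'', so the comparison is with that argument, and there your write-up has a genuine gap at its central step. You set yourself the task of proving that the locus of $s\in\spec R$ over whose geometric fibre the sheaf generates $r$-jets is \emph{open}, and your proposed route to this openness --- fibrewise $h^0$-semicontinuity stratum by stratum, followed by ``checking that these strata assemble into a genuine open condition on the base'' --- is exactly the part that is never carried out. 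Worse, it meets a real obstruction you do not address: even on a stratum where the collision pattern and multiplicities $(a_1,\dots,a_l)$ are fixed, the sheaves $\sO_{X_s}/m_{x_1}^{a_1}\cdots m_{x_l}^{a_l}$ do not form a flat family as the points move, because the colength of $m_x^{a}$ jumps with the Hilbert function of $\sO_{X_s,x}$ at singular points --- and singular fibres are the only case in which $h^i(\omega^\bullet)$ is of interest. So upper semicontinuity does not apply as stated even on a single stratum, and the openness on which your whole characteristic-zero argument rests remains unproven.

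The standard argument avoids openness altogether by quantifying in the opposite order: fix one tuple $x_1,\dots,x_{r+1}\in X(k)$ \emph{before} spreading out. The tuple is defined over a finitely generated extension of $\mathrm{Frac}(R)$, so after enlarging $R$ and shrinking $\spec R$ the points become sections $\sigma_1,\dots,\sigma_{r+1}$ of $\sX\to\spec R$ with constant collision pattern, the sheaves $\sF=h^i(\omega^\bullet_{\sX/R})\otimes\sL^{\otimes(n+r+1)}$ and $\sF\otimes\sO_{\sX}/\prod_j I_{\sigma_j}$ are $R$-flat with formation commuting with base change, and their pushforwards to $\spec R$ are locally free, compatibly with base change. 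The failure of jet generation at this one tuple is then the non-surjectivity locus of a single map of locally free sheaves on $\spec R$, which is \emph{closed}; since it misses an $F$-injective closed point $s$ (where case (1), applied over $\overline{\kappa(s)}$ and descended to $\kappa(s)$ by faithful flatness, gives surjectivity), it cannot contain the generic point $\eta$, and surjectivity over $\kappa(\eta)$ ascends to $k$ by flat base change. No universal family, no stratification, and no openness statement are needed; this also removes the unexamined final step of your sketch, where passing from the geometric generic fibre to $X_k$ again hides the same quantifier-over-all-tuples problem. One caveat survives in both your write-up and the paper's definition: ``infinitely many $F$-injective closed fibres'' need not meet the shrunken open locus once $\dim\spec R\geq2$; the hypothesis actually needed (and standard in the literature) is that such fibres form a dense set.
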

\begin{proof}
(1) follows from Theorem \ref{thm_Frobenius_sepajets}. (2) follows from the standard argument of reduction to mod $p$.
\end{proof}
\begin{rmk}
It is conjectured that log canonical singularities are always $F$-pure type (and hence $F$-injective type). If this is true, then we see that if $X$ is a log canonical projective variety, then $\sO_X(K_X)\otimes L^{\otimes (n+r+1)}$ simultaneously generates  $r$-jets, provided that $L$ is ample and globally generated. This is interesting because Kodaira vanishing theorem fails for log canonical varieties.
\end{rmk}
In the end we give an example showing that Corollary \ref{thm_relative_Fujita} fails in positive characteristics.
\begin{prop}\label{prop_contex_relative_Fujita}
Let $k$ be an algebraically closed field with $\textrm{char}(k)=p>0$, then for any $m\in\bN$, there is a curve fibration $X\rightarrow \mathbb{P}^1_k$ with the total space $X$ being a smooth surface, such that $f_\ast\omega_X\otimes \sO(m)$ is not globally generated.
\end{prop}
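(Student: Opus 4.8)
The plan is to reduce the statement to the well-documented failure of semipositivity of direct images of relative canonical sheaves in characteristic $p$, and then to make the negativity as large as the given $m$ demands.

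First I would record the reduction. Let $f:X\to\mathbb{P}^1$ be a fibration with $X$ a smooth projective surface and general fibre a Gorenstein curve of arithmetic genus $g\geq 1$, and set $\omega_{X/\mathbb{P}^1}:=\omega_X\otimes f^\ast\omega_{\mathbb{P}^1}^{-1}$. Since $f$ is flat and Gorenstein, the projection formula gives $f_\ast\omega_X\cong E\otimes\sO_{\mathbb{P}^1}(-2)$, where $E:=f_\ast\omega_{X/\mathbb{P}^1}$ is torsion free, hence locally free, of rank $g$ on $\mathbb{P}^1$; by relative duality $E^\vee$ is (a twist of) $R^1f_\ast\sO_X$. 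By Grothendieck's splitting theorem $E\cong\bigoplus_{i=1}^{g}\sO_{\mathbb{P}^1}(a_i)$, so that $f_\ast\omega_X\otimes\sO(m)\cong\bigoplus_i\sO_{\mathbb{P}^1}(a_i+m-2)$. On $\mathbb{P}^1$ a direct sum of line bundles is globally generated if and only if every summand has non-negative degree, and global generation passes to quotients; therefore $f_\ast\omega_X\otimes\sO(m)$ fails to be globally generated as soon as some $a_i\leq 1-m$. Thus it suffices to produce, for the given $m$, a fibration of the above type whose Hodge-type bundle $E$ has a summand $\sO(a)$ with $a\leq 1-m$, i.e. whose $E$ is far from nef.

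Second, I would invoke the positive-characteristic pathology to obtain such fibrations. In characteristic $0$ this is impossible: Fujita--Kawamata semipositivity forces every $a_i\geq 0$, so $f_\ast\omega_X\otimes\sO(m)$ is globally generated for all $m\geq 2$, in agreement with Corollary \ref{thm_relative_Fujita}. Over a field of characteristic $p>0$, however, there exist curve fibrations over $\mathbb{P}^1$ for which $E=f_\ast\omega_{X/\mathbb{P}^1}$ is \emph{not} nef, i.e. admits a quotient line bundle, hence a summand, of negative degree. Concretely one may take a Moret-Bailly type non-isotrivial family over $\mathbb{P}^1$, a Tango--Raynaud inseparable cyclic-cover construction, or -- when $p\in\{2,3\}$ -- a quasi-elliptic fibration, for which $E$ is a line bundle of negative degree. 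Such constructions carry a free parameter (the degree of the Tango structure, the number of wild fibres, etc.), and I would tune it so that the resulting summand $\sO(a)$ satisfies $a\leq 1-m$ directly.

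Finally, as an alternative amplification of a single fixed seed $f_0:X_0\to\mathbb{P}^1$ with a negative summand $\sO(a_0)$, one can pull back along the $e$-fold Frobenius $F^e:\mathbb{P}^1\to\mathbb{P}^1$ of the base: setting $X_e:=X_0\times_{\mathbb{P}^1,F^e}\mathbb{P}^1$ with projection $f_e$, flat base change yields $f_{e\ast}\omega_{X_e/\mathbb{P}^1}\cong(F^e)^\ast E$, so $\sO(a_0)$ becomes $\sO(p^e a_0)$ and any $m$ is eventually beaten. The step that genuinely requires care -- and which I expect to be the main obstacle -- is the seed itself: producing an explicit pathological fibration over $\mathbb{P}^1$ (rather than over a high-genus base, as in Raynaud's original counterexample) whose direct image is non-nef, and verifying that the total space is a \emph{smooth} surface. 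In the Frobenius-amplification route the smoothness of $X_e$ holds at every point where $f_0$ is smooth (étale-locally one adjoins a $p^e$-th root of a coordinate, which preserves regularity), but over the finitely many critical values of $f_0$ one must either arrange the seed to be a smooth morphism or pass to the minimal resolution of $X_e$ and check that the negative summand survives in the direct image; this last verification is the technical heart of the construction.
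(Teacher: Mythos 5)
Your overall route coincides with the paper's: reduce to exhibiting a summand of sufficiently negative degree in the direct image, seed the construction with Moret-Bailly's fibration over $\mathbb{P}^1_k$, and amplify the negativity by pulling back along a power of Frobenius on the base. Your reduction step is correct (a locally free sheaf on $\mathbb{P}^1$ with a negative-degree quotient cannot be globally generated). But the proposal contains a genuine gap, namely the very step you flag as ``the technical heart'' and then leave open: after the base change $X''=X'\times_{\mathbb{P}^1_k,F^n}\mathbb{P}^1_k$ the total space is singular, and you never verify that the negative summand survives passage to a smooth model. This matters because the proposition demands a \emph{smooth} surface $X$: your flat base change identity $f_{e\ast}\omega_{X_e/\mathbb{P}^1}\cong(F^e)^\ast E$ is a statement about the relative dualizing sheaf of the singular fiber product, not about $f_\ast\omega_X$ for a resolution $X$ of it. Moreover, your first proposed escape --- arranging the seed to be a smooth morphism --- is not available for the seed you name: Moret-Bailly's fibration with $g_\ast\omega_{X'/\mathbb{P}^1_k}\simeq\sO_{\mathbb{P}^1_k}(-1)\oplus\sO_{\mathbb{P}^1_k}(p)$ is semi-stable with singular fibres, and your other alternatives (Tango--Raynaud covers, quasi-elliptic fibrations) are either constructed over higher-genus bases or come with no justified ``tuning'' to reach degree $\leq 1-m$.

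The paper closes exactly this gap using semi-stability of the seed: since $g$ is semi-stable, the fiber product $X''$ has only rational double points (\'etale-locally $xy=t^{p^n}$, an $A_{p^n-1}$ singularity), which are Gorenstein and crepant, so the minimal resolution $h:X\rightarrow X''$ satisfies $h_\ast\omega_X=\omega_{X''}$. Combining this with the base-change formula $\omega_{X''/\mathbb{P}^1_k}=q^\ast\omega_{X'/\mathbb{P}^1_k}$ for the Cohen-Macaulay morphism $g$, flat base change along $F^n$, and the projection formula yields the exact answer $f_\ast\omega_X=\sO_{\mathbb{P}^1_k}(-p^n-2)\oplus\sO_{\mathbb{P}^1_k}(p^{n+1}-2)$, and choosing $n$ with $p^n>m-2$ finishes the proof. (Your gap also admits a softer repair: one always has an inclusion $h_\ast\omega_X\subseteq\omega_{X''}$ which is an isomorphism off the exceptional locus, so $f_\ast\omega_X$ is a rank-two locally free subsheaf, of full rank, of $\sO_{\mathbb{P}^1_k}(-p^n-2)\oplus\sO_{\mathbb{P}^1_k}(p^{n+1}-2)$; since a line bundle of degree $>-p^n-2$ admits no nonzero map to $\sO_{\mathbb{P}^1_k}(-p^n-2)$, and both summands cannot inject into the rank-one sheaf $\sO_{\mathbb{P}^1_k}(p^{n+1}-2)$, one summand of $f_\ast\omega_X$ is forced to have degree $\leq -p^n-2$.) Either way, the verification you deferred is where the proof actually lives and must be supplied.
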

\begin{proof}
By \cite{Bailly1981} there is a semi-stable surface fibration $g:X^\prime \rightarrow \mathbb{P}^1_k$ over $k$ such that$$g_*\omega_{X^\prime/\mathbb{P}^1_k}\backsimeq\sO_{\mathbb{P}^1_k}(-1)\oplus\sO_{\mathbb{P}^1_k}(p).$$
Consider the Frobenius base change of this fibration
$$\xymatrix{
X\ar[dr]_f \ar[r]^h &X^{\prime\prime} \ar[d]_-p\ar[r]^q &X^\prime\ar[d]^-g\\
 & \mathbb{P}^1_k\ar[r]^{F^n} &\mathbb{P}^1_k
}$$
where $F^n$ is the $n$ times compositions of the absolute Frobenius morphism of $\mathbb{P}^1_k$ and $h$ is the minimal desingularization of the normal surface $X^{\prime\prime}=\mathbb{P}^1_k\times_{\mathbb{P}^1_k}X^\prime$.
Since $g$ is a semi-stable fibration, $X^{\prime\prime}$ has only rational double point singularities. So it has only Gorenstein singularities and
$$h_*\omega_{X}=\omega_{X^{\prime\prime}}.$$
Since $g$ is a CM morphism, we have the following relation (\cite{Conrad2000}) between the relative dualizing sheaves:
$$\omega_{X^{\prime\prime}/\mathbb{P}^1_k}=q^*\omega_{X^{\prime}/\mathbb{P}^1_k}.$$
Consequently we have:
\begin{eqnarray*}
f_*\omega_X&=&p_*h_*\omega_{X}=p_*\omega_{X^{\prime\prime}}
=p_*\omega_{X^{\prime\prime}/\mathbb{P}^1_k}\otimes\omega_{\mathbb{P}^1_k}
=p_*q^*\omega_{X^{\prime}/\mathbb{P}^1_k}\otimes\omega_{\mathbb{P}^1_k}\\
&=&(F^n)^*g_*\omega_{X^{\prime}/\mathbb{P}^1_k}\otimes\omega_{\mathbb{P}^1_k}
=(F^n)^*(\sO_{\mathbb{P}^1_k}(-1)\oplus\sO_{\mathbb{P}^1_k}(p))\otimes\sO_{\mathbb{P}^1_k}(-2)\\
&=&\sO_{\mathbb{P}^1_k}(-p^n-2)\oplus\sO_{\mathbb{P}^1_k}(p^{n+1}-2).
\end{eqnarray*}
For any line bundle $L=\sO_{\mathbb{P}^1_k}(m)$ on $\mathbb{P}^1_k$, we can choose a $n$ such that $-p^n-2+2m<0$. Then
$f_*\omega_X\otimes L^{\otimes 2}=\sO_{\mathbb{P}^1_k}(-p^n-2+2m)\oplus\sO_{\mathbb{P}^1_k}(p^{n+1}-2+2m)$
always has a direct summand of negative degree which breaks the global generation of $f_*\omega_X\otimes L^{\otimes 2}$.
%
%Remark: Since $f_*\omega_X$ have a negative summand, it is impossible to have a surjective morphism $F_\ast f^N_*\omega_X\rightarrow f_*\omega_X$ for any $N>0$ (c.f. Theorem \ref{thm_Frobenius_sepajets}).
\end{proof}

\begin{bibdiv}
\begin{biblist}
\bib{A-Siu1995}{article}{
   author={Angehrn, Urban},
   author={Siu, Yum Tong},
   title={Effective freeness and point separation for adjoint bundles},
   journal={Invent. Math.},
   volume={122},
   date={1995},
   number={2},
   pages={291--308},
   issn={0020-9910},
   review={\MR{1358978}},
   doi={10.1007/BF01231446},
}
\bib{Bailly1981}{article}{
   author={L. Moret-Bailly},
   title={Familles de courbes et de varietes abeliennes sur $\mathbb{P}^1$},
   journal={Asterisque},
   volume={86},
   date={1981},
   pages={109-140},
}
\bib{Beltramette-Sommese1991}{article}{
   author={Beltrametti, M.},
   author={Sommese, A. J.},
   author={},
   title={Zero cycles and $k$-th order embeddings of smooth projective
   surfaces},
   note={With an appendix by Lothar G\"ottsche},
   conference={
      title={Problems in the theory of surfaces and their classification},
      address={Cortona},
      date={1988},
   },
   book={
      series={Sympos. Math., XXXII},
      publisher={Academic Press, London},
   },
   date={1991},
   pages={33--48},
   review={\MR{1273371}},
}
\bib{Beltrametti1993}{article}{
   author={Beltrametti, Mauro C.},
   author={Sommese, Andrew J.},
   title={On $k$-jet ampleness},
   conference={
      title={Complex analysis and geometry},
   },
   book={
      series={Univ. Ser. Math.},
      publisher={Plenum, New York},
   },
   date={1993},
   pages={355--376},
   review={\MR{1211891}},
}
\bib{Conrad2000}{book}{
   author={Conrad, Brian},
   title={Grothendieck duality and base change},
   series={Lecture Notes in Mathematics},
   volume={1750},
   publisher={Springer-Verlag, Berlin},
   date={2000},
   pages={vi+296},
   isbn={3-540-41134-8},
   review={\MR{1804902}},
   doi={10.1007/b75857},
}
\bib{Demailly1993}{article}{
   author={Demailly, Jean-Pierre},
   title={A numerical criterion for very ample line bundles},
   journal={J. Differential Geom.},
   volume={37},
   date={1993},
   number={no.~2},
   pages={323--374},
   issn={0022-040X},
   review={\MR{1205448}},
}	

\bib{Demailly1996}{article}{
   author={Demailly, Jean-Pierre},
   title={$L^2$ vanishing theorems for positive line bundles and
   adjunction theory},
   conference={
      title={Transcendental methods in algebraic geometry},
      address={Cetraro},
      date={1994},
   },
   book={
      series={Lecture Notes in Math.},
      volume={1646},
      publisher={Springer, Berlin},
   },
   date={1996},
   pages={1--97},
   review={\MR{1603616}},
   doi={10.1007/BFb0094302},
}
\bib{Ein-Lazarsfeld1993}{article}{
   author={Ein, Lawrence},
   author={Lazarsfeld, Robert},
   title={Global generation of pluricanonical and adjoint linear series on
   smooth projective threefolds},
   journal={J. Amer. Math. Soc.},
   volume={6},
   date={1993},
   number={no.~4},
   pages={875--903},
   issn={0894-0347},
   review={\MR{1207013}},
}
\bib{Ein1995}{article}{
   author={Ein, Lawrence},
   author={K\"uchle, Oliver},
   author={Lazarsfeld, Robert},
   title={Local positivity of ample line bundles},
   journal={J. Differential Geom.},
   volume={42},
   date={1995},
   number={2},
   pages={193--219},
   issn={0022-040X},
   review={\MR{1366545}},
}
\bib{Hara-Watanabe2002}{article}{
   author={Hara, Nobuo},
   author={Watanabe, Kei-Ichi},
   title={F-regular and F-pure rings vs. log terminal and log canonical
   singularities},
   journal={J. Algebraic Geom.},
   volume={11},
   date={2002},
   number={2},
   pages={363--392},
   issn={1056-3911},
   review={\MR{1874118}},
   doi={10.1090/S1056-3911-01-00306-X},
}
\bib{Fujita1987}{article}{
   author={Fujita, Takao},
   title={On polarized manifolds whose adjoint bundles are not semipositive},
   conference={
      title={Algebraic geometry, Sendai, 1985},
   },
   book={
      series={Adv. Stud. Pure Math.},
      volume={10},
      publisher={North-Holland, Amsterdam},
   },
   date={1987},
   pages={167--178},
   review={\MR{946238}},
}
\bib{Kawamata1997}{article}{
   author={Kawamata, Yujiro},
   title={On Fujita's freeness conjecture for $3$-folds and $4$-folds},
   journal={Math. Ann.},
   volume={308},
   date={1997},
   number={no.~3},
   pages={491--505},
   issn={0025-5831},
   review={\MR{1457742}},
}
\bib{Kawamata2000}{article}{
   author={Kawamata, Yujiro},
   title={On a relative version of Fujita's freeness conjecture},
   conference={
      title={Complex geometry},
      address={G\"ottingen},
      date={2000},
   },
   book={
      publisher={Springer, Berlin},
   },
   date={2002},
   pages={135--146},
   review={\MR{1922102}},
}
\bib{Keeler2008}{article}{
   author={Keeler, Dennis S.},
   title={Fujita's conjecture and Frobenius amplitude},
   journal={Amer. J. Math.},
   volume={130},
   date={2008},
   number={5},
   pages={1327--1336},
   issn={0002-9327},
   review={\MR{2450210}},
   doi={10.1353/ajm.0.0015},
}
\bib{Kollar1986}{article}{
   author={Koll\'ar, J\'anos},
   title={Higher direct images of dualizing sheaves. I},
   journal={Ann. of Math. (2)},
   volume={123},
   date={1986},
   number={1},
   pages={11--42},
   issn={0003-486X},
   review={\MR{825838}},
   doi={10.2307/1971351},
}
\bib{Kollar1993}{article}{
   author={Koll\'ar, J\'anos},
   title={Effective base point freeness},
   journal={Math. Ann.},
   volume={296},
   date={1993},
   number={4},
   pages={595--605},
   issn={0025-5831},
   review={\MR{1233485}},
   doi={10.1007/BF01445123},
}

\bib{Kollar1997}{article}{
   author={Koll\'ar, J\'anos},
   title={Singularities of pairs},
   conference={
      title={Algebraic geometry---Santa Cruz 1995},
   },
   book={
      series={Proc. Sympos. Pure Math.},
      volume={62},
      publisher={Amer. Math. Soc., Providence, RI},
   },
   date={1997},
   pages={221--287},
   review={\MR{1492525}},
}
\bib{Kollar-Mori1998}{book}{
   author={Koll\'ar, J\'anos},
   author={Mori, Shigefumi},
   title={Birational geometry of algebraic varieties},
   series={Cambridge Tracts in Mathematics},
   volume={134},
   note={With the collaboration of C. H. Clemens and A. Corti;
   Translated from the 1998 Japanese original},
   publisher={Cambridge University Press, Cambridge},
   date={1998},
   pages={viii+254},
   isbn={0-521-63277-3},
   review={\MR{1658959}},
   doi={10.1017/CBO9780511662560},
}

\bib{Lazarsfeld2004}{book}{
   author={Lazarsfeld, Robert},
   title={Positivity in algebraic geometry. I},
   series={Ergebnisse der Mathematik und ihrer Grenzgebiete. 3. Folge. A
   Series of Modern Surveys in Mathematics [Results in Mathematics and
   Related Areas. 3rd Series. A Series of Modern Surveys in Mathematics]},
   volume={48},
   note={Classical setting: line bundles and linear series},
   publisher={Springer-Verlag, Berlin},
   date={2004},
   pages={xviii+387},
   isbn={3-540-22533-1},
   review={\MR{2095471}},
   doi={10.1007/978-3-642-18808-4},
}
\bib{Mumford1966}{book}{
   author={Mumford, David},
   title={Lectures on curves on an algebraic surface},
   series={With a section by G. M. Bergman. Annals of Mathematics Studies,
   No. 59},
   publisher={Princeton University Press, Princeton, N.J.},
   date={1966},
   pages={xi+200},
   review={\MR{0209285}},
}
\bib{Norimatsu1978}{article}{
   author={Norimatsu, Yoshiki},
   title={Kodaira vanishing theorem and Chern classes for $\partial
   $-manifolds},
   journal={Proc. Japan Acad. Ser. A Math. Sci.},
   volume={54},
   date={1978},
   number={4},
   pages={107--108},
   issn={0386-2194},
   review={\MR{494655}},
}
\bib{F-singular}{article}{
   author={ZSOLT, PATAKFALVI},
   author={KARL, SCHWEDE},
   author={KEVIN, TUCKER},
   title={Notes for the workshop on positive characteristic algebraic geometry},
   journal={arxiv.org/abs/1412.2203},
}
\bib{Peters-Steenbrink2008}{book}{
   author={Peters, Chris A. M.},
   author={Steenbrink, Joseph H. M.},
   title={Mixed Hodge structures},
   series={Ergebnisse der Mathematik und ihrer Grenzgebiete. 3. Folge. A
   Series of Modern Surveys in Mathematics [Results in Mathematics and
   Related Areas. 3rd Series. A Series of Modern Surveys in Mathematics]},
   volume={52},
   publisher={Springer-Verlag, Berlin},
   date={2008},
   pages={xiv+470},
   isbn={978-3-540-77015-2},
   review={\MR{2393625}},
}
\bib{Popa-Schnell2014}{article}{
   author={Popa, Mihnea},
   author={Schnell, Christian},
   title={On direct images of pluricanonical bundles},
   journal={Algebra Number Theory},
   volume={8},
   date={2014},
   number={9},
   pages={2273--2295},
   issn={1937-0652},
   review={\MR{3294390}},
   doi={10.2140/ant.2014.8.2273},
}
\bib{Reider1988}{article}{
   author={Reider, Igor},
   title={Vector bundles of rank $2$ and linear systems on algebraic
   surfaces},
   journal={Ann. of Math. (2)},
   volume={127},
   date={1988},
   number={no.~2},
   pages={309--316},
   issn={0003-486X},
   review={\MR{932299}},
}

\bib{Schwede2009}{article}{
   author={Schwede, Karl},
   title={$F$-injective singularities are Du Bois},
   journal={Amer. J. Math.},
   volume={131},
   date={2009},
   number={2},
   pages={445--473},
   issn={0002-9327},
   review={\MR{2503989}},
   doi={10.1353/ajm.0.0049},
}
\bib{Siu1996}{article}{
   author={Siu, Yum-Tong},
   title={Effective very ampleness},
   journal={Invent. Math.},
   volume={124},
   date={1996},
   number={1-3},
   pages={563--571},
   issn={0020-9910},
   review={\MR{1369428}},
   doi={10.1007/s002220050063},
}
\bib{Smith1997}{article}{
   author={Smith, Karen E.},
   title={Fujita's freeness conjecture in terms of local cohomology},
   journal={J. Algebraic Geom.},
   volume={6},
   date={1997},
   number={3},
   pages={417--429},
   issn={1056-3911},
   review={\MR{1487221}},
}
\bib{Ye-Zhu2015}{article}{
   author={Ye, Fei},
   author={Zhu, Zhixian},
   title={On Fujita's freeness conjecture in dimension 5},
   journal={arxiv.org/abs/1511.09154},
}

\end{biblist}
\end{bibdiv}
\end{document}